\newtheorem{definition}{Definition}
\newtheorem{thm}{Theorem}
\newtheorem{cor}{Corollary}
\newtheorem{rem}{Remark}
\theoremstyle{definition}
\newtheorem{exm}{Example}
\newcommand{\N}{\mathbb{N}}
\newcommand{\Z}{\mathbb{Z}}
\newcommand{\R}{\mathbb{R}}
\newcommand{\norm}[1]{||#1||}
\newcommand{\abs}[1]{|#1|}
\newcommand{\e}{\varepsilon}
\begin{document}

\title{Bounded solutions and asymptotic stability to nonlinear second-order neutral difference equations with quasi-differences}
\author{Magdalena Nockowska--Rosiak\thanks{%
Lodz University of Technology, Poland, email: magdalena.nockowska@p.lodz.pl}}
\maketitle

\begin{abstract}
This work is devoted to the study of the nonlinear second-order neutral difference equations with quasi-differences of the form
$$
\Delta \left( r_{n} \Delta \left( x_{n}+q_{n}x_{n-\tau}\right)\right)= a_{n}f(x_{n-\sigma})+b_n
$$
with respect to $(q_n)$. For $q_n\to1$, $q_n\in(0,1)$ the standard fixed point approach is not sufficed to get the existence of the bounded solution, so we combine this method with an approximation technique to achieve our goal. Moreover, for $p\ge 1$ and $\sup|q_n|<2^{1-p}$ using Krasnoselskii's fixed point theorem we obtain sufficient conditions of the existence of the solution which belongs to $l^p$ space.
\newline
{\small \textbf{Keywords} nonlinear neutral difference equation, Krasnoselskii's fixed point theorem, approximation. }\newline
{\small \textbf{AMS Subject classification} 39A10, 39A22.}
\end{abstract}

\section{Introduction}
Difference equations are used in mathematical models in diverse areas such as economy, biology, computer science, see, for example \cite{Agarwal}, \cite{Elaydi}. In the past thirty years, oscillation, nonoscillation, the asymptotic behaviour and existence of bounded solutions to many types second-order difference equation have been widely examined, see for example \cite{Agarwal2003}, \cite{Cheng1999}, \cite{Dosly}, \cite{Galewski}, \cite{Gou}, \cite{JS}, \cite{Lalli1994}, \cite{Lalli1992}, \cite{Lou}, \cite{Meng}, \cite{Migda2005}, \cite{Migda2014}, \cite{S1}, \cite{S2}, \cite{S3}, \cite{Schmeidel2013}, \cite{Schmeidel2012}, \cite{TKP1}, and references therein.
\par
The second-order difference equation with quasi-difference of the form
$$
\Delta \left( r_{n} \Delta \left( x_{n}+q_{n}x_{n-\tau}\right)\right)= F(n,x_{n-\sigma}) 
$$
is studied in the literature with respect to a sequence $(q_n)$. The fixed point theory is the standard technique to prove the existence of the bounded solution to the considered problem with constant $(q_n)$ and $(q_n)$ which is separated from 1. Let us present short overview of papers which deal with this problem. By using Banach's fixed point theorem, Jinfa \cite{Jinfa} and Liu et al. \cite{Liu2009} investigated the nonoscillatory solution to the second-order neutral delay difference equation with any constant coefficients $q_n$ it means the equation
$$
\Delta \left( r_{n} \Delta \left( x_{n}+qx_{n-\tau}\right)\right)+ f(n, x_{n-d_{1n}},\ldots,x_{n-d_{kn}})=c_n.
$$
In fact Liu et al. \cite{Liu2009} proved the existence of uncountable many bounded nonoscillatory solutions for the above problem under Lipschitz continuity condition. By Leray-Schauder type of condensing operators Agarwal et al. \cite{Agarwal2003} examined the existence of a nonoscillatory solution to the problem
$$
\Delta \left( r_{n} \Delta \left( x_{n}+qx_{n-\tau}\right)\right)+ F(n+1, x_{n+1-\sigma})=0,
$$
where $q\in\R\setminus\{\pm1\}$. Liu et al. \cite{Liu2011} discussed the existence of uncountable many bounded positive solutions to
$$
\Delta \left( r_{n} \Delta \left( x_{n}+b_nx_{n-\tau}-c_n\right)\right)+ f(n, x(f_1(n)),\ldots,x(f_k(n)))=d_n,
$$
where $\sup_{n\in\N}b_n=b^\star$, $b^\star\neq1$ or $\inf_{n\in\N}b_n=b_\star$, $b_\star\neq-1$ by Krasnoselskii's fixed point theorem.
\par
On the other hand, Petropoulous and Siafarikas considered different types of difference equations in the Hilbert space, see, \cite{PS2001}, \cite{PS2004}, \cite{PS2006}. Moreover, the functional-analytical method to a general nonautonomous difference equations of the form $x_{k+1}=f_k(x_k,x_{k+1})$ was considered by P\"otzsche and Ey \cite{EP2008}, P\"otzsche \cite{P2009}. This approach allows better characterize solutions to difference equations.
\par
In this paper we study the following second-order neutral difference equation with quasi-difference
\begin{equation}\label{problem}
\Delta \left( r_{n} \Delta \left( x_{n}+q_{n}x_{n-\tau}\right)\right)= a_{n}f(x_{n-\sigma})+b_n, 
\end{equation}
where $\tau\in\N\cup\{0\}$, $\sigma\in\Z$, $a, b, q:{\mathbb{N}}\rightarrow {\mathbb{R}}$, $r:{\mathbb{N}}\rightarrow {\mathbb{R}}\setminus \{0\}$, and $f:{\mathbb{R}}\rightarrow {\mathbb{R}}$ is a locally Lipschitz function. If the sequence $(q_n)$ is convergent to 1, then fixed point approach can not be applied to solve the studied problem, because Krasnoselskii's fixed point theorem need two operators which one of them is a contraction. To overcome the limitation of this method we combine this approach with the approximation technique under the additionally assumption $q_n\in(0,1)$. The approximation approach in this type of difference equations and the case when $(q_n)$ is convergent to 1 has not been discussed so far, to our knowledge. Moreover, in the case $p\ge 1$ and $\sup|q_n|<2^{1-p}$ we establish sufficient conditions of the existence the solution to \eqref{problem} which belongs to $l^p$ space. To get our result Krasnoselskii's fixed point theorem is used.

\section{ Preliminaries}

Throughout this paper, we assume that $\Delta$ is the forward difference operator, ${\mathbb{N}}_{k}:=\left\{k,k+1,\dots \right\} $ where $k$ is a given positive integer, $\N_1=\N$ and ${\mathbb{R}}$ is a set of all real numbers. 
\par
Let $k\in\N$. We consider the Banach space $l^{\infty}_{k}$ of all real bounded sequences $x\colon {\mathbb{N}}_{k}\rightarrow {\mathbb{R}}$ equipped with the standard supremum norm, i.e. 
$$
\Vert x\Vert =\sup_{n\in {\mathbb{N}}_{k}}|x_{n}|, \text{ for }x=(x_n)_{n\geq k}\in \ l^{\infty}_k.
$$
\begin{definition}\cite{Cheng}
A subset $A$ of $l^\infty_k$ is said to be uniformly Cauchy if for every $\varepsilon>0$ there exists $n_0\in\N_k$ such that $\abs{x_i-x_j}<\varepsilon$
for any $i,j\geq n_0$ and $x=(x_n)\in A$.
\end{definition}
\begin{thm}\cite{Cheng}\label{uni}
A bounded, uniformly Cauchy subset of $l^\infty_k$ is relatively compact.
\end{thm}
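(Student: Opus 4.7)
The plan is to show that every sequence in a bounded, uniformly Cauchy set $A\subset l^\infty_k$ admits a subsequence converging in the supremum norm. Take an arbitrary sequence $(x^{(m)})_{m\in\N}\subset A$ with $x^{(m)}=(x^{(m)}_n)_{n\ge k}$. Boundedness gives a constant $M$ with $\abs{x^{(m)}_n}\le M$ for all $m,n$, so for each fixed index $n$ the real sequence $(x^{(m)}_n)_m$ lies in $[-M,M]$. First I would apply Bolzano--Weierstrass pointwise together with a Cantor diagonal extraction to produce a subsequence $(x^{(m_j)})_{j\in\N}$ such that $x^{(m_j)}_n \to x_n$ as $j\to\infty$ for every $n\in\N_k$. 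The candidate limit $x=(x_n)$ automatically satisfies $\abs{x_n}\le M$, hence $x\in l^\infty_k$.

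The key step is upgrading pointwise convergence to uniform convergence, and this is where the uniform Cauchy hypothesis enters. Fix $\e>0$. By the defining property of $A$ there exists $n_0\in\N_k$ such that
\[
\abs{x^{(m)}_i - x^{(m)}_{n_0}} < \tfrac{\e}{3} \quad \text{for all } i\ge n_0,\; m\in\N.
\]
Passing to the limit $j\to\infty$ along the extracted subsequence yields $\abs{x_i - x_{n_0}}\le \tfrac{\e}{3}$ for every $i\ge n_0$, so the tail of the limit is controlled. Separately, since only finitely many indices $n\in\{k,k+1,\dots,n_0\}$ remain, the pointwise convergence is automatically uniform on this finite set: there exists $J$ with $\abs{x^{(m_j)}_n - x_n}<\tfrac{\e}{3}$ for all $j\ge J$ and all such $n$.

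Combining these, for $j\ge J$ and $i> n_0$ the triangle inequality
\[
\abs{x^{(m_j)}_i - x_i} \le \abs{x^{(m_j)}_i - x^{(m_j)}_{n_0}} + \abs{x^{(m_j)}_{n_0} - x_{n_0}} + \abs{x_{n_0} - x_i} < \e
\]
holds, while on the initial block $k\le i\le n_0$ the bound $\abs{x^{(m_j)}_i - x_i}<\tfrac{\e}{3}<\e$ is already in hand. Taking the supremum over $i\ge k$ gives $\norm{x^{(m_j)}-x}\le \e$ for $j\ge J$, so $x^{(m_j)}\to x$ in $l^\infty_k$. This proves that the closure of $A$ is sequentially compact, hence $A$ is relatively compact. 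The only real obstacle is organising the $\e/3$ split cleanly so that the uniform Cauchy condition does all the work on the tail while the finite diagonal extraction handles the head; nothing deeper than Bolzano--Weierstrass and the triangle inequality is required.
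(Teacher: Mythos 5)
Your argument is correct: the diagonal extraction from boundedness gives a pointwise limit, and the $\e/3$ split (uniform Cauchy on the tail $i\ge n_0$, uniform convergence on the finite head $k\le i\le n_0$) correctly upgrades this to convergence in the supremum norm, which is exactly relative compactness in the metric space $l^\infty_k$. The paper itself gives no proof of this statement, quoting it from Cheng and Patula; your write-up is the standard argument behind that cited result, so there is nothing to compare against and no gap to report.
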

For a real $p\ge 1$ we define $l^p$ the Banach space of $p$-summable sequences as follows
$$
l^p:=\{x\colon {\mathbb{N}}\rightarrow {\mathbb{R}}: \sum^\infty_{n=1}|x(n)|^p<\infty\}
$$
with the standard norm, i.e.
$$
\Vert x\Vert_{l^p} =\left(\sum^\infty_{n=1}|x(n)|^p\right)^{1/p}.
$$
The relative compactness criterion in $l^p$ is given in the following theorem
\begin{thm}\label{costara}(\cite{costara},  p.106)
Let $p\in[1,\infty)$. A subset $A$ of $l^p$ is relatively compact if and only if $A$ is bounded and 
$$
\lim\limits_{l\to\infty}\sup_{x=(x_n)\in A}\sum^\infty_{n=l}|x_n|^p=0.
$$
\end{thm}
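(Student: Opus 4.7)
The plan is to prove the two implications separately: the forward direction via a standard finite-$\e$-net argument, and the reverse direction via a Cantor diagonal extraction combined with a head/tail splitting of the $l^p$-norm.

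For necessity, I would assume $A$ is relatively compact in $l^p$. Boundedness is automatic, since every relatively compact subset of a normed space is bounded. For the uniform tail condition, fix $\e>0$ and use relative compactness to obtain a finite $\e/2$-net $\{y^{(1)},\dots,y^{(N)}\}\subset l^p$ for $A$. Since each $y^{(i)}\in l^p$, there exists $L_i$ with $\left(\sum_{n\ge L_i}|y^{(i)}_n|^p\right)^{1/p}<\e/2$, and setting $L:=\max_i L_i$ I would show that for any $x\in A$, selecting $y^{(i)}$ within $\e/2$ of $x$ in norm and applying the triangle inequality to the tail sums yields $\left(\sum_{n\ge L}|x_n|^p\right)^{1/p}<\e$ uniformly in $x\in A$, which is the required condition.

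For sufficiency, assume $A$ is bounded and the tail sup vanishes. Given a sequence $(x^{(k)})\subset A$, the pointwise bound $|x^{(k)}_n|\le \|x^{(k)}\|_{l^p}$ makes each coordinate sequence bounded, so a Cantor diagonal extraction produces a subsequence --- still denoted $(x^{(k)})$ --- with $x^{(k)}_n\to x_n$ for every $n$. Fatou's lemma applied to the partial sums, together with the uniform bound on $\|x^{(k)}\|_{l^p}$, gives $x=(x_n)\in l^p$ and transfers the uniform tail estimate from $A$ to $x$ itself. To obtain norm convergence I would fix $\e>0$, use the tail hypothesis to choose $L$ so that the bound $\sum_{n\ge L}|x^{(k)}_n-x_n|^p\le 2^{p-1}\sum_{n\ge L}(|x^{(k)}_n|^p+|x_n|^p)$ is uniformly smaller than $\e/2$, and then exploit pointwise convergence on the finite head $\{1,\dots,L-1\}$ to drive $\|x^{(k)}-x\|_{l^p}\to 0$.

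The main obstacle lies in the sufficiency half: showing that the pointwise-convergent limit $x$ actually belongs to $l^p$ and, more importantly, that convergence upgrades from pointwise to $l^p$-norm. Pointwise convergence of coordinates alone is not enough without a tightness condition, since mass could escape to infinity along the subsequence; the uniform vanishing of the tails is exactly the hypothesis that rules this out. Combined with the elementary inequality $|a-b|^p\le 2^{p-1}(|a|^p+|b|^p)$ and Fatou's lemma, this is what converts pointwise convergence plus tightness into genuine norm convergence, and the rest of the argument is a careful choice of $L$ followed by a choice of $k$ depending on $\e$.
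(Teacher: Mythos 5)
The paper does not actually prove this statement: it is quoted as a known compactness criterion from Costara and Popa (p.~106) and used as a black box, so there is no in-paper argument to compare yours against. Judged on its own, your proof is correct and is the standard one for this Fr\'echet--Kolmogorov-type criterion in $l^p$. The necessity half is complete as sketched: relative compactness in the complete space $l^p$ gives total boundedness, hence a finite $\e/2$-net, and the triangle inequality applied to the tail seminorm $x\mapsto\bigl(\sum_{n\ge L}|x_n|^p\bigr)^{1/p}$ transfers the smallness of the net's tails to all of $A$; monotonicity of the tails in $l$ then yields the stated limit. The sufficiency half is also sound: boundedness gives coordinatewise bounds via $|x^{(k)}_n|\le\norm{x^{(k)}}_{l^p}$, the diagonal extraction produces a pointwise limit, Fatou's lemma puts the limit in $l^p$ and gives it the same uniform tail bound, and the head/tail splitting with the convexity inequality $|a-b|^p\le 2^{p-1}(|a|^p+|b|^p)$ (valid precisely because $p\ge1$) upgrades pointwise convergence to norm convergence. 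You correctly identify the tail hypothesis as the tightness condition preventing escape of mass to infinity. The only point worth making explicit in a written version is the final logical step: you have shown every sequence in $A$ has a subsequence converging in $l^p$, and since $l^p$ is a metric space this sequential relative compactness is equivalent to relative compactness, which closes the argument. An equally common alternative for sufficiency is to verify total boundedness directly by truncating to the first $L$ coordinates and covering the resulting bounded finite-dimensional set, but your sequential route is just as clean.
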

To get main results of this paper we use Krasnoselskii's fixed point theorem of the form.
\begin{thm}\label{Kras}(\cite{Zeidler}, 11.B p. 501)
Let $X$ be a Banach space, $B$ be a bounded, closed, convex subset of $X$ and $S, G:B\to X$ be mappings such that $Sx+Gy\in B$ for any $x,y\in B$. If $S$ is a contraction and $G$ is a compact, then the equation
$$
Sx+Gx=x
$$ 
has a solution in $B$.
\end{thm}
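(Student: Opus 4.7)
The plan is to reduce Krasnoselskii's theorem to Schauder's fixed point theorem by constructing an auxiliary continuous, compact self-map of $B$. For each fixed $y \in B$, I would consider the map $T_y : B \to X$ defined by $T_y(x) = Sx + Gy$. By hypothesis $T_y(B) \subset B$, and since $S$ is a contraction with some constant $k \in [0,1)$, so is $T_y$ with the same constant. Banach's contraction principle then yields a unique fixed point $\varphi(y) \in B$ satisfying $\varphi(y) = S(\varphi(y)) + Gy$, and this determines a well-defined map $\varphi : B \to B$.

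Next I would verify continuity of $\varphi$. Given $y_1, y_2 \in B$, subtracting the two fixed-point equations and applying the contraction estimate for $S$ gives
\[
\|\varphi(y_1) - \varphi(y_2)\| \le k\,\|\varphi(y_1) - \varphi(y_2)\| + \|Gy_1 - Gy_2\|,
\]
so that $\|\varphi(y_1) - \varphi(y_2)\| \le (1-k)^{-1}\|Gy_1 - Gy_2\|$. Continuity of $G$ (which is included in the standing assumption that $G$ is compact) thus forces $\varphi$ to be continuous. The same Lipschitz estimate identifies $\varphi$ with the composition of $G$ followed by the continuous map $(I-S)^{-1}$ defined on $G(B)$.

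The main technical point is relative compactness of $\varphi(B)$. Since $G$ is compact, $G(B)$ is relatively compact in $X$. Composing the continuous map $(I-S)^{-1}$ with $G$ preserves relative compactness, so $\varphi(B)$ is relatively compact in $B$. Consequently $\varphi : B \to B$ is a continuous, compact self-map of a bounded, closed, convex subset of the Banach space $X$.

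Finally, I would invoke Schauder's fixed point theorem for $\varphi$ to obtain $x \in B$ with $\varphi(x) = x$; unwinding the definition yields $x = Sx + Gx$, which is the desired conclusion. The principal obstacle is justifying the compactness of $\varphi(B)$ — one needs both that $G$ sends $B$ into a relatively compact set and that $(I-S)^{-1}$ is well-defined and Lipschitz on the relevant domain. Both follow from the contraction hypothesis on $S$ together with the a priori bound $Sx + Gy \in B$, so no additional structural assumptions are required.
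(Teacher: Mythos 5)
The paper does not prove this statement at all: it is imported verbatim from Zeidler (11.B, p.~501) as a known tool, so there is no internal proof to compare against. Your argument is the standard, correct proof of Krasnoselskii's theorem --- solve $x = Sx + Gy$ for each fixed $y$ by the Banach contraction principle on the closed set $B$, observe that $\varphi = (I-S)^{-1}\circ G$ is a continuous compact self-map of $B$ via the Lipschitz bound $(1-k)^{-1}$ for $(I-S)^{-1}$ on $(I-S)(B)$, and finish with Schauder --- and it is essentially the proof found in the cited reference, with the only unstated (and harmless) point being that $B$ must be nonempty for Schauder's theorem to apply.
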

To use the approximation technique we need the following the Banach-Alaoglu theorem.
\begin{thm}\cite{Rudin}\label{Alaoglu} 
If $X$ is Banach space and $S^\star=\{x^\star\in X^\star:\norm{x^\star}\leq 1\}$, then $S^\star$ is weak$^\star$-compact. 
\end{thm}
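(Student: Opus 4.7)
The statement is the classical Banach--Alaoglu theorem, which the author cites from Rudin. My plan is to recover the standard Tychonoff-based proof, since this is the shortest route and the one most naturally framed in a Banach space context.

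The strategy is to embed $S^\star$ into a product of compact scalar discs and identify it with a closed subset. For each $x\in X$, set $D_x:=\{\alpha\in\R:|\alpha|\le \norm{x}\}$, which is a compact subset of $\R$. Form the product space
$$
P:=\prod_{x\in X}D_x,
$$
equipped with the product topology. By Tychonoff's theorem $P$ is compact. Now define the evaluation map $\Phi\colon S^\star\to P$ by $\Phi(x^\star):=(x^\star(x))_{x\in X}$; this is well defined because, for $x^\star\in S^\star$, we have $|x^\star(x)|\le\norm{x^\star}\norm{x}\le\norm{x}$ and so $x^\star(x)\in D_x$ for every $x$.

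The next step is to check that $\Phi$ is a homeomorphism onto its image when $S^\star$ carries the weak$^\star$-topology and $P$ the product topology. Both topologies are, by definition, the coarsest topologies making the evaluations $x^\star\mapsto x^\star(x)$ continuous for each $x\in X$, so $\Phi$ is continuous, injective, and its inverse on $\Phi(S^\star)$ is continuous as well (a standard universal-property argument). Hence it suffices to show that $\Phi(S^\star)$ is closed in $P$.

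To show closedness, observe that $\Phi(S^\star)$ is exactly the set of $(\alpha_x)_{x\in X}\in P$ satisfying the linearity constraints
$$
\alpha_{\lambda x+\mu y}=\lambda\alpha_x+\mu\alpha_y\qquad\text{for all }x,y\in X,\ \lambda,\mu\in\R.
$$
Each such constraint cuts out a closed subset of $P$, because for fixed $x,y,\lambda,\mu$ the map $(\alpha_z)_{z\in X}\mapsto\alpha_{\lambda x+\mu y}-\lambda\alpha_x-\mu\alpha_y$ is a continuous function on $P$ (it only involves three coordinate projections) and $\{0\}$ is closed in $\R$. The intersection over all linearity conditions is therefore closed, so $\Phi(S^\star)$ is a closed subset of the compact space $P$, hence compact, and transporting this back through the homeomorphism $\Phi$ gives that $S^\star$ is weak$^\star$-compact.

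The only delicate point is the identification of the relative weak$^\star$-topology on $S^\star$ with the subspace topology inherited from $P$ via $\Phi$; once this is verified (it is essentially a matter of matching subbasic open sets), the rest of the argument is standard. No technical obstacle should arise beyond this routine topological bookkeeping.
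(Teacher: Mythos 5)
The paper offers no proof of this statement---it is the classical Banach--Alaoglu theorem, cited directly from Rudin---and your Tychonoff-based argument (embedding $S^\star$ into $\prod_{x\in X}\{\alpha:|\alpha|\le\norm{x}\}$, matching the weak$^\star$ and product topologies, and cutting out the image by the closed linearity constraints) is precisely the standard proof given in that reference. The argument is correct and complete for the real Banach space setting used here; nothing further is needed.
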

Let us close the preliminaries paragraph by definitions of different types of solution to \eqref{problem}. By a solution to equation \eqref{problem} we mean a sequence $x:{\mathbb{N}}_{k}\rightarrow {\mathbb{R}}$ which satisfies \eqref{problem} for every $n\in {\mathbb{N}}_{k}$ for some $k\geq\max\{\tau,\sigma\}$. By a full solution to equation \eqref{problem} we mean a sequence $x:{\mathbb{N}}_{\max\{\tau,\sigma\}}\rightarrow {\mathbb{R}}$ which satisfies \eqref{problem} for every $n\geq\max\{\tau,\sigma\}$. For $p\ge1$, a solution $x$ to \eqref{problem} is said to $l^p$ solution, if $x\in l^p$.

\section{The existence of bounded solutions respect to sequence $(q_n)$}

In this section, sufficient conditions for the existence of a bounded solution to equation \eqref{problem} respect to values of sequence $(q_n)$ are derived.

In this section, unless otherwise note, we assume $\tau\in\N\cup\{0\}$, $\sigma\in\Z$, $a, b, q:{\mathbb{N}}\rightarrow {\mathbb{R}}$, $r:{\mathbb{N}}\rightarrow {\mathbb{R}}\setminus \{0\}$ and $f:{\mathbb{R}}\rightarrow \mathbb{R}$. 

\begin{thm}\label{l-infty}
Assume that
\begin{itemize}
\item[$(H_{fl})$] $f$ is a locally Lipschitz function;
\item[$(H_{s})$] $\sum\limits_{s=1}^{\infty } \left\vert\frac{1}{r_{s}}\right\vert\sum\limits_{t=s}^{\infty }\left\vert a_{t}\right\vert <+\infty,$\quad $\sum\limits_{s=1}^{\infty } \left\vert\frac{1}{r_{s}}\right\vert\sum\limits_{t=s}^{\infty }\left\vert b_{t}\right\vert <+\infty,$
\item[$(H_{q})$] $\sup\limits_{n\in\N}|q_n|=q^\star<1$.
\end{itemize} 
Then, the equation \eqref{problem} possesses a bounded solution.
\end{thm}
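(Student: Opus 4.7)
The plan is to recast \eqref{problem} as a fixed-point equation of Krasnoselskii type on a ball of $l^\infty_k$ and then invoke Theorem \ref{Kras}. First I would write $y_n = x_n + q_n x_{n-\tau}$, so the equation reads $\Delta(r_n \Delta y_n) = a_n f(x_{n-\sigma}) + b_n$. Because of $(H_s)$, summing twice from $n$ to $\infty$ (imposing the natural boundary condition $r_N \Delta y_N \to 0$) is legitimate and yields the integrated form
$$
x_n \;=\; L - q_n x_{n-\tau} - \sum_{s=n}^{\infty}\frac{1}{r_s}\sum_{t=s}^{\infty}\bigl(a_t f(x_{t-\sigma})+b_t\bigr),
$$
for a constant $L$. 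Any sequence satisfying this identity for $n \ge k$ solves \eqref{problem}, so the task reduces to producing a fixed point of the right-hand side.

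Taking $L=0$ for simplicity, I would fix $d>0$ and work in the closed convex ball $B=\{x\in l^\infty_k:\norm{x}\le d\}$, splitting the right-hand side into
$$
(Sx)_n = -q_n x_{n-\tau},\qquad (Gx)_n = -\sum_{s=n}^{\infty}\frac{1}{r_s}\sum_{t=s}^{\infty}\bigl(a_t f(x_{t-\sigma})+b_t\bigr).
$$
By $(H_{fl})$, the number $M_f:=\sup_{|u|\le d}\abs{f(u)}$ is finite and $f$ is Lipschitz on $[-d,d]$ with some constant $L_f$. Choose $k$ so large that
$$
T_k \;:=\; M_f\sum_{s=k}^{\infty}\abs{\tfrac{1}{r_s}}\sum_{t=s}^{\infty}\abs{a_t} \;+\; \sum_{s=k}^{\infty}\abs{\tfrac{1}{r_s}}\sum_{t=s}^{\infty}\abs{b_t} \;\le\; d(1-q^\star);
$$
this is possible by $(H_s)$. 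With this choice, for every $x,y\in B$ and $n\ge k$ one has $\abs{(Sx)_n+(Gy)_n}\le q^\star d + T_k \le d$, so $Sx+Gy\in B$. The map $S$ is a contraction with constant $q^\star<1$ by $(H_q)$, while $G$ is continuous on $B$ because $\norm{Gx-Gy}\le L_f\norm{x-y}\sum_{s\ge k}\abs{1/r_s}\sum_{t\ge s}\abs{a_t}$. Relative compactness of $G(B)$ I would get from Theorem \ref{uni}: $G(B)$ is bounded by $T_k$, and for $k\le i<j$ the estimate $\abs{(Gx)_i-(Gx)_j}\le T_i$ is uniform in $x\in B$, so $G(B)$ is uniformly Cauchy. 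Theorem \ref{Kras} then provides a fixed point $x\in B$ of $S+G$, which is the desired bounded solution.

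The main (minor) obstacle is calibrating the constants: one must simultaneously accommodate the contraction defect $q^\star d$ coming from $S$ and the tail $T_k$ coming from $G$, which forces both the radius $d$ and the starting index $k$ to be chosen in the right order --- fix $d$ first, then $M_f$ is determined, and only then is $k$ chosen large enough to absorb $T_k$ into $d(1-q^\star)$. The gap $1-q^\star>0$ supplied by $(H_q)$ is precisely what makes this possible and is the reason the argument must fail, and a different (approximation) technique be invoked, in the borderline regime $q_n\to 1$ discussed later in the paper. Everything else is routine: the contraction property is immediate, the uniform-Cauchy verification is a direct consequence of the double-sum hypothesis $(H_s)$, and continuity of $G$ uses only that $f$ is Lipschitz on the bounded range $[-d,d]$ guaranteed by $(H_{fl})$.
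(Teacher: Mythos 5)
Your proposal follows the paper's proof almost verbatim: the same splitting into the contraction $(Sx)_n=-q_nx_{n-\tau}$ and the compact double-tail-sum operator, the same tail estimate from $(H_s)$ to push $Sx+Gy$ back into the ball using the gap $1-q^\star$, the same Lipschitz bound for continuity of $G$, the same uniform-Cauchy argument via Theorem \ref{uni}, and the same invocation of Theorem \ref{Kras}; your remark on the order of choosing $d$, $M_f$, $k$ matches the paper's choice of $M$, $Q$, $n_0$. Two small corrections. First, your integrated form carries a sign error: summing $\Delta(r_n\Delta y_n)=g_n$ twice from $n$ to $\infty$ produces two factors of $-1$ that cancel, so the correct identity is $x_n=L-q_nx_{n-\tau}+\sum_{s=n}^{\infty}\frac{1}{r_s}\sum_{t=s}^{\infty}\bigl(a_tf(x_{t-\sigma})+b_t\bigr)$; with your minus sign the fixed point of $S+G$ solves $\Delta\left(r_n\Delta(x_n+q_nx_{n-\tau})\right)=-\bigl(a_nf(x_{n-\sigma})+b_n\bigr)$ rather than \eqref{problem}. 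Since all your estimates involve only absolute values, flipping the sign of $G$ repairs this with no other change. Second, on the full ball $B\subset l^\infty_k$ the expressions $x_{n-\tau}$ and $x_{t-\sigma}$ for $n,t$ near $k$ refer to coordinates below the index set $\N_k$; the paper sidesteps this by working in $l^\infty_1$, freezing the coordinates $x_1,\dots,x_{n_0+\beta-1}$ at $0$ with $\beta=\max\{\tau,\sigma\}$, and letting the operators act nontrivially only from $n_0+\beta$ onward. Neither point changes the substance of the argument.
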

\begin{proof}
Let $M>0$. From the continuity of $f$ on $[-M,M]$ we get the existence of $Q>0$ such that
$$
|f(x)|\leq Q, \ \textrm{for} \ x\in[-M, M].
$$
By $(H_s)$ there exists $n_0>\beta:=\max\{\tau,\sigma\}$ such that
\begin{equation}\label{szereg-n0}
\sum^{\infty}_{s=n_0}\left\vert\frac{1}{r_s}\right\vert\sum^{\infty}_{t=s}\left(|a_t|Q+|b_t|\right)<(1-q^\star)M.
\end{equation}
We consider the Banach space $l^\infty_1$ and its subset
$$
A_{n_0}=\left\{x=(x_n)_{n\in\N_1}\in l^\infty_1: x_1=\ldots=x_{n_0+\beta-1}=0, \ |x_n|\leq M, n\geq n_0+\beta\right\}.
$$ 
Observe that $A_{n_0}$ is a nonempty, bounded, convex and closed subset of $l^{\infty}_1$.
\\Define two mappings $T_1,T_2\colon l^{\infty }_1\rightarrow l^{\infty}_1$ as follows 
$$
(T_1x)_{n}=
\begin{cases}
0, &\text{for}\ 1\leq n<n_{0}+\beta
\\ 
-q_{n}x_{n-\tau}, &\text{for}\ n\geq n_{0}+\beta
\end{cases}
$$
$$
(T_2x)_{n}=
\begin{cases}
0, &\text{for}\ 1\leq n<n_{0}+\beta
\\ 
\sum\limits_{s=n}^{\infty }\frac{1}{r_{s}}\sum\limits_{t=s}^{\infty }\left( a_{t}f(x_{t-\sigma})+b_t \right), &\text{for}\ n\geq n_{0}+\beta.
\end{cases}
$$
Our next goal is to check assumptions of Theorem \ref{Kras} - Krasnoselskii's fixed point.
\\Firstly, we show that $T_1x+T_2y\in A_{n_0}$ for $x,y\in A_{n_0}$. Let $x,y\in A_{n_0}$. For $n<n_{0}+\beta$ $(T_1x+T_2y)_n=0$. For $n\geq n_{0}+\beta$ from assumption $(H_q)$ and \eqref{szereg-n0} we get
$$
\left\vert(T_1x+T_2y)_n\right\vert\leq \left\vert q_nx_{n-\tau}\right\vert +\sum^{\infty}_{s=n}\left\vert\tfrac{1}{r_s}\right\vert\sum^\infty_{t=s}\left(|a_t|Q+|b_t|\right)\leq q^\star M+(1-q^\star) M=M.
$$
It is easy to see that
$$
\norm{T_1x-T_1y}\leq q^\star \norm{x-y},\ \textrm{for} \ x,y\in A_{n_0},
$$
so that $T_1$ is a contraction. 
\\To prove the continuity of $T_2$, we note assumption $(H_{fl})$ implies that $f$ is Lipschitz function on $[-M,M]$, say, with constant $L>0$, which means
$$
\abs{f(u)-f(v)}\leq L\abs{u-v},\ \textrm{for} \ u,v\in [-M,M].
$$
Hence
$$
\norm{T_2x-T_2y}\leq L\left(\sum\limits_{s=1}^{\infty } \left\vert\frac{1}{r_{s}}\right\vert\sum\limits_{t=s}^{\infty }\left\vert a_{t}\right\vert \right)\norm{x-y}, \ \textrm{for} \ x,y\in A_{n_0}.
$$
Actually, we prove that $T_2$ is the Lipschitz operator.
\\Now we show that $T_2(A_{n_0})$ is uniformly Cauchy. Let $\e>0$. From $(H_s)$ we get the existence of $n_\e\in\N$ such that 
$$
2\sum^{\infty}_{s=n_\e}\left\vert\frac{1}{r_s}\right\vert\sum^{\infty}_{t=s}\left(|a_t|Q+|b_t|\right)<\e.
$$
For $m>n\geq n_\e\geq n_0+\beta$ and for $x\in A_{n_0}$ we have 
\begin{align*}
&\abs{(T_2x)_n-(T_2x)_m}=\left\vert\sum^{\infty}_{s=n}\frac{1}{r_s}\sum^{\infty}_{t=s}\left(a_tf(x_{t-\sigma})+b_t\right)-\sum^{\infty}_{s=m}\frac{1}{r_s}\sum^{\infty}_{t=s}\left(a_tf(x_{t-\sigma})+b_t\right)\right\vert
\\
&\le2\sum^{\infty}_{s=n_\e}\left\vert\frac{1}{r_s}\right\vert\sum^{\infty}_{t=s}\left(|a_t|Q+|b_t|\right)<\e.
\end{align*}
Since $T_2(A_{n_0})$ is uniformly Cauchy and bounded then by Theorem \ref{uni} $T_2(A_{n_0})$ is relatively compact in $l^\infty$ which means that $T_2$ is a compact operator.  

From Krasnosielskii's theorem we get that there exists $x=(x_n)_{n\in\N_1}$ the fixed point of $T_1+T_2$ on $A_{n_0}$. Applying operator $\Delta$ to both sides of the above equation and multiplying by $r_n$ and applying operator $\Delta$ second time for $n\ge n_0+\beta$ we get $x=(x_n)_{n\in\N_{n_0+\beta}}$ is the solution to \eqref{problem}. 
\end{proof}
\begin{cor}\label{l-infty-drugie}
Assume that
\begin{itemize}
\item[$(H_{fl})$] $f$ is a locally Lipschitz function;
\item[$(H'_{s})$] $\sum\limits_{s=\sigma+1}^{\infty } \left\vert\frac{1}{r_{s}}\right\vert\sum\limits_{t=\sigma}^{s-1 }\left\vert a_{t}\right\vert <+\infty,$\quad $\sum\limits_{s=\sigma+1}^{\infty } \left\vert\frac{1}{r_{s}}\right\vert\sum\limits_{t=\sigma}^{s-1 }\left\vert b_{t}\right\vert <+\infty,$
\item[$(H_{q})$] $\sup\limits_{n\in\N}|q_n|=q^\star<1$.
\end{itemize} 
Then, the equation \eqref{problem} possesses a bounded solution.
\end{cor}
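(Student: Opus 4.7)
The plan is to run the proof of Theorem~\ref{l-infty} almost verbatim, replacing the second operator by one whose summation range matches the new hypothesis $(H'_s)$. I would keep $A_{n_0}$ and $T_1$ exactly as before, and define
\[
(T_2x)_n=-\sum_{s=n}^{\infty}\frac{1}{r_s}\sum_{t=\sigma}^{s-1}\bigl(a_tf(x_{t-\sigma})+b_t\bigr)
\]
for $n\ge n_0+\beta$, with $(T_2x)_n=0$ otherwise. The motivation is the telescoping identity $\Delta\sum_{s=n}^{\infty}F(s)=-F(n)$: one application of $\Delta$ followed by multiplication by $r_n$ produces $\sum_{t=\sigma}^{n-1}(a_tf(x_{t-\sigma})+b_t)$, and a further $\Delta$ yields the right-hand side of \eqref{problem}.

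First I would pick $M>0$, fix $Q$ bounding $|f|$ on $[-M,M]$ by continuity, and use $(H'_s)$ to choose $n_0>\beta:=\max\{\tau,\sigma\}$ with
\[
\sum_{s=n_0}^{\infty}\left|\tfrac{1}{r_s}\right|\sum_{t=\sigma}^{s-1}\bigl(|a_t|Q+|b_t|\bigr)<(1-q^\star)M.
\]
This is the analogue of \eqref{szereg-n0} and makes the inclusion $T_1x+T_2y\in A_{n_0}$ go through via the same $q^\star M+(1-q^\star)M=M$ bound. The contraction property of $T_1$ is unchanged. For $T_2$ the Lipschitz estimate becomes $\norm{T_2x-T_2y}\le L\bigl(\sum_{s=\sigma+1}^{\infty}|1/r_s|\sum_{t=\sigma}^{s-1}|a_t|\bigr)\norm{x-y}$, which is finite by $(H'_s)$, and uniform Cauchy-ness of $T_2(A_{n_0})$ follows because for $m>n\ge n_\varepsilon$ the values $(T_2x)_n$ and $(T_2x)_m$ differ by at most $2\sum_{s=n_\varepsilon}^{\infty}|1/r_s|\sum_{t=\sigma}^{s-1}(|a_t|Q+|b_t|)$, which $(H'_s)$ lets me push below any prescribed $\varepsilon$ by choosing $n_\varepsilon$ large.

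Theorem~\ref{uni} then gives compactness of $T_2(A_{n_0})$, Krasnoselskii's theorem (Theorem~\ref{Kras}) supplies a fixed point $x\in A_{n_0}$ of $T_1+T_2$, and the identity $x_n+q_nx_{n-\tau}=(T_2x)_n$ for $n\ge n_0+\beta$ yields \eqref{problem} by differencing, multiplying by $r_n$, and differencing once more. I do not anticipate a serious obstacle; the only delicate point is keeping the sign in front of the double sum in $T_2$ correct so that the telescoping cancellations reproduce \eqref{problem} with the right orientation. Once this is settled, the argument is a mechanical transcription of the proof of Theorem~\ref{l-infty} with tails of the form $\sum_{t=s}^{\infty}$ replaced throughout by partial sums $\sum_{t=\sigma}^{s-1}$.
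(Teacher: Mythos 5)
Your proposal matches the paper's proof essentially verbatim: the paper also runs the argument of Theorem~\ref{l-infty} with $(T_2x)_n=-\sum_{s=n}^{\infty}\frac{1}{r_s}\sum_{t=\sigma}^{s-1}(a_tf(x_{t-\sigma})+b_t)$ and chooses $n_0$ so that $\sum_{s=n_0}^{\infty}\left|\frac{1}{r_s}\right|\sum_{t=\sigma}^{s-1}(|a_t|Q+|b_t|)<(1-q^\star)M$. Your sign choice and telescoping check are correct, so the argument goes through exactly as you describe.
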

\begin{proof}
The proof is analogous to the proof of the Theorem \ref{l-infty} with 
$$
(T_2x)_{n}=
\begin{cases}
0, &\text{for}\ 1\leq n<n_{0}+\beta
\\ 
-\sum\limits_{s=n}^{\infty }\frac{1}{r_{s}}\sum\limits_{t=\sigma}^{s-1 }\left( a_{t}f(x_{t-\sigma})+b_t \right), &\text{for}\ n\geq n_{0}+\beta
\end{cases}
$$
where for any $M>0$ there exist $Q>0$ and $n_0>\beta:=\max\{\tau,\sigma\}$ such that
$$
\sum^{\infty}_{s=n_0}\left\vert\frac{1}{r_s}\right\vert\sum^{s-1}_{t=\sigma}\left(|a_t|Q+|b_t|\right)<(1-q^\star)M.
$$
\end{proof}

\begin{rem}
Assumptions $(H_{s})$ and $(H'_{s})$ are not comparable. Indeed, let us consider sequences $(a_n), (b_n), (r_n)$, where $a_n=\tfrac{1}{(2n-1)(2n+1)}$, $b_n=0$ $r_n=\sqrt{n}$ for $n\in\N$, (see \cite{N2016}). Then
$$
\sum^{\infty}_{s=1}\left|\tfrac{1}{r_{s}}\right|\sum^{\infty}_{t=s}|a_t|=\sum^{\infty}_{s=1}\tfrac{1}{\sqrt{s}}\sum^{\infty}_{t=s}\tfrac{1}{(2t-1)(2t+1)}=\sum^{\infty}_{s=1}\tfrac{1}{2\sqrt{s}(2s-1)}<\infty,
$$
so assumption $(H_s)$ of Theorem \ref{l-infty} is satisfied. Assumption $(H'_{s})$ of Corollary \ref{l-infty-drugie} is not fulfil because  
$$
\sum^{\infty}_{s=1}\left|\tfrac{1}{r_{s}}\right|\sum^{s-1}_{t=1}|a_t|=\sum^{\infty}_{s=1}\tfrac{1}{\sqrt{s}}\sum^{s-1}_{t=1}\tfrac{1}{(2t-1)(2t+1)}=\sum^{\infty}_{s=1}\tfrac{(s-1)}{\sqrt{s}(2s-1)}=\infty.
$$ 
On the other hand, let us consider sequences $(a'_n), (b'_n), (r'_n)$, where $a'_n=n$, $b'_n=0$ $r'_n=2^n$ for $n\in\N$. Then
$$
\sum^{\infty}_{s=1}\left|\tfrac{1}{r_{s}}\right|\sum^{s-1}_{t=1}|a_t|=\sum^{\infty}_{s=1}2^{-s}\sum^{s-1}_{t=1}t=\sum^{\infty}_{s=1}2^{-s-1}s(s-1)<\infty.
$$
which means assumption $(H'_{s})$ of Corollary \ref{l-infty-drugie} is satisfied. To see that assumption $(H_{s})$ of Theorem \ref{l-infty} is not fulfilled notice that $(H_s)$ implies that $\sum^\infty_{t=0}|a'_t|<\infty,$ which give a contradiction for $(a'_n)$.
\end{rem}
\begin{cor}
If in Theorem \ref{l-infty} and  Corollary \ref{l-infty-drugie}  we additionally assume
\begin{itemize}
\item[$(H'_{0})$] $\tau,\sigma\in\N\cup\{0\}$, $\tau>\sigma$ and \quad $q_n\neq0$ for $n\in\N$.
\end{itemize} 
Then, the equation \eqref{problem} possesses a bounded full solution.
\end{cor}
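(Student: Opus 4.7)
The plan is to extend backward the bounded solution $x=(x_n)_{n\in\N_1}$ produced in the proof of Theorem~\ref{l-infty} (respectively Corollary~\ref{l-infty-drugie}). That solution is the fixed point of $T_1+T_2$ on $A_{n_0}$: it is bounded by $M$, equals $0$ on $\{1,\ldots,n_0+\beta-1\}$, and satisfies \eqref{problem} on $\N_{n_0+\beta}$. What still must be done is to overwrite this initial zero segment by values that force \eqref{problem} to hold all the way down to $n=\beta=\max\{\tau,\sigma\}$, which equals $\tau$ by $(H'_0)$.

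The key algebraic observation is the following. Expanding the outer difference, \eqref{problem} at index $n=N+\tau$ reads
\[
r_{N+\tau+1}E_{N+\tau+2}-(r_{N+\tau+1}+r_{N+\tau})E_{N+\tau+1}+r_{N+\tau}E_{N+\tau}=a_{N+\tau}f(x_{N+\tau-\sigma})+b_{N+\tau},
\]
where $E_m:=x_m+q_mx_{m-\tau}$. The value $x_N$ appears on the left only inside $E_{N+\tau}=x_{N+\tau}+q_{N+\tau}x_N$, contributing the single term $r_{N+\tau}q_{N+\tau}x_N$; every other $x$-index in sight ($N+1$, $N+2$, $N+\tau$, $N+\tau+1$, $N+\tau+2$, and, crucially, $N+\tau-\sigma$, the last because $\tau>\sigma$) is strictly larger than $N$. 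Since $r_n\ne 0$ is a standing hypothesis and $q_n\ne 0$ by $(H'_0)$, the coefficient $r_{N+\tau}q_{N+\tau}$ is nonzero, so the identity can be solved uniquely for $x_N$ in terms of values $x_m$ with $m>N$.

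I would then run a finite backward recursion: starting from $N=n_0+\beta-1$ and decreasing $N$ by one at each step, replace $x_N$ by the unique value the above identity forces, continuing until the smallest index required by the full-solution convention is reached. Only finitely many entries are overwritten, so the resulting sequence $\tilde x$ remains bounded, agrees with $x$ for $n\ge n_0+\beta$, and now satisfies \eqref{problem} on the full intended range, which is precisely the bounded full solution sought.

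The only real obstacle is to certify that this backward recursion is well-defined at every step, and that is exactly what the two clauses of $(H'_0)$ supply: the nonvanishing of $q_n$ makes the coefficient of the unknown nonzero, while the strict inequality $\tau>\sigma$ guarantees that the argument of $f$ at each step lies at an index already assigned earlier in the recursion (or inherited from the original solution). Everything else is bookkeeping.
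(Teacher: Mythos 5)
Your proof is correct and follows essentially the same route as the paper: a finite backward recursion that determines $x_{n-\tau}$ from already-known later terms, using $q_n\neq0$ to make the coefficient of the unknown nonzero and $\tau>\sigma$ to ensure the argument of $f$ is already assigned at each step. The only cosmetic difference is that you invert the expanded second-order difference equation itself, whereas the paper inverts the summed fixed-point identity $x_{n-\tau}=\frac{1}{q_{n}}\bigl(-x_{n}+\sum_{s=n}^{\infty}\frac{1}{r_{s}}\sum_{t=s}^{\infty}(a_{t}f(x_{t-\sigma})+b_{t})\bigr)$ (respectively its analogue for Corollary \ref{l-infty-drugie}), starting from $n:=n_0+2\tau-1$; both recursions produce the same bounded backward extension.
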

\begin{proof}
We find previous $n_0$ terms of sequence $x$ by formula 
$$
x_{n-\tau}=\frac{1}{q_{n}}\left( -x_{n}+\sum\limits_{s={n}}^{\infty} \frac{1}{r_{s}}\sum\limits_{t=s}^{\infty }\left(a_{t}f(x_{t-\sigma})+b_{t}\right)\right) ,
$$
or
$$
x_{n-\tau}=\frac{1}{q_{n}}\left( -x_{n}+\sum\limits_{s={n}}^{\infty} \frac{1}{r_{s}}\sum\limits_{t=\sigma}^{s-1 }\left(a_{t}f(x_{t-\sigma})+b_{t}\right)\right) ,
$$
starting with putting $n:=n_0+2\tau-1$. 
\end{proof}

Using the same technique we get the following result.
\begin{thm}
Assume that
\begin{itemize}
\item[$(H_0)$] $\tau,\sigma\in\N\cup\{0\}$, $\tau>\sigma$,
\item[$(H_{fl})$] $f$ is a locally Lipschitz function;
\item[$(H_{s})$] $\sum\limits_{s=1}^{\infty } \left\vert\frac{1}{r_{s}}\right\vert\sum\limits_{t=s}^{\infty }\left\vert a_{t}\right\vert <+\infty,$\quad $\sum\limits_{s=1}^{\infty } \left\vert\frac{1}{r_{s}}\right\vert\sum\limits_{t=s}^{\infty }\left\vert b_{t}\right\vert <+\infty,$
\item[$(H^1_{q})$] $\inf\limits_{n\in\N}q_n=q^\star>1$.
\end{itemize}
Then, there exists a bounded full solution to \eqref{problem}.
\end{thm}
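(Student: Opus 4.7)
The plan is to mirror Theorem~\ref{l-infty}, but to solve the summed form of \eqref{problem} for $x_{n-\tau}$ instead of for $x_n$, so that $1/q_n\le 1/q_\star<1$ plays the role of the contraction constant. Fix $M>0$, let $Q=\sup_{|u|\le M}|f(u)|$, and by $(H_s)$ pick $n_0>\beta=\tau$ such that
$$\sum_{s=n_0}^{\infty}\left|\frac{1}{r_s}\right|\sum_{t=s}^{\infty}\bigl(|a_t|Q+|b_t|\bigr)<(q_\star-1)M.$$
On the set $A_{n_0}=\{x\in l^{\infty}_1:x_j=0\text{ for }1\le j<n_0+\beta,\ |x_n|\le M\text{ for }n\ge n_0+\beta\}$, define (with both maps set to $0$ for $n<n_0+\beta$)
$$(T_1 x)_n=-\frac{1}{q_{n+\tau}}x_{n+\tau},\qquad(T_2 x)_n=\frac{1}{q_{n+\tau}}\sum_{s=n+\tau}^{\infty}\frac{1}{r_s}\sum_{t=s}^{\infty}\bigl(a_t f(x_{t-\sigma})+b_t\bigr)$$
for $n\ge n_0+\beta$; the shift of index by $\tau$ reflects the above rearrangement.

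Next I verify the hypotheses of Krasnoselskii's Theorem~\ref{Kras} on $A_{n_0}$. For $x,y\in A_{n_0}$ and $n\ge n_0+\beta$,
$$|(T_1 x+T_2 y)_n|\le\frac{M}{q_\star}+\frac{(q_\star-1)M}{q_\star}=M,$$
so $T_1 x+T_2 y\in A_{n_0}$ whenever $x,y\in A_{n_0}$; $T_1$ is a $1/q_\star$-contraction by $(H^1_q)$; continuity of $T_2$ follows from the local Lipschitz property $(H_{fl})$ on $[-M,M]$ exactly as in Theorem~\ref{l-infty} (with an extra harmless factor $1/q_\star$); and relative compactness of $T_2(A_{n_0})$ follows from the analogous uniform-Cauchy estimate
$$|(T_2 x)_n-(T_2 x)_m|\le\frac{2}{q_\star}\sum_{s=n_\varepsilon}^{\infty}\left|\frac{1}{r_s}\right|\sum_{t=s}^{\infty}\bigl(|a_t|Q+|b_t|\bigr)<\varepsilon$$
for $m>n\ge n_\varepsilon$, combined with Theorem~\ref{uni}. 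Krasnoselskii's theorem then yields a fixed point $x\in A_{n_0}$; multiplying the identity $x_n=(T_1 x+T_2 x)_n$ by $q_{n+\tau}$ and setting $m=n+\tau$ rewrites it as $x_m+q_m x_{m-\tau}=\sum_{s=m}^{\infty}(1/r_s)\sum_{t=s}^{\infty}(a_t f(x_{t-\sigma})+b_t)$ for $m\ge n_0+2\tau$, from which applying $\Delta(r_m\Delta\,\cdot\,)$ recovers \eqref{problem}.

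To upgrade this to a bounded full solution on $\mathbb{N}_\tau$, I would overwrite the values at indices $\tau,\tau+1,\dots,n_0+2\tau-1$ by iterating the backward recurrence
$$x_{n-\tau}=\frac{1}{q_n}\left(-x_n+\sum_{s=n}^{\infty}\frac{1}{r_s}\sum_{t=s}^{\infty}\bigl(a_t f(x_{t-\sigma})+b_t\bigr)\right)$$
downward from $n=n_0+2\tau$ to $n=2\tau$, exactly as in the corollary preceding this theorem. Crucially, $\tau>\sigma$ ensures that every application of this formula uses $x_j$ only at indices $j$ strictly larger than $n-\tau$, which are already defined, and an easy downward induction combined with the tail estimate above shows that each such $x_{n-\tau}$ stays in $[-M,M]$. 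The main subtlety is organising the index shift in the definitions of $T_1$ and $T_2$: one must rearrange the summed form of \eqref{problem} consistently so that the Krasnoselskii fixed-point identity is literally the shifted summed equation, and then check that the backward extension step is compatible with the forward identity produced on $\mathbb{N}_{n_0+2\tau}$.
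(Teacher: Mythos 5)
Your proposal is correct and follows essentially the same route as the paper: the paper's own proof uses exactly the shifted operators $(T_1x)_n=-\tfrac{1}{q_{n+\tau}}x_{n+\tau}$ and $(T_2x)_n=\tfrac{1}{q_{n+\tau}}\sum_{s=n+\tau}^{\infty}\tfrac{1}{r_s}\sum_{t=s}^{\infty}(a_tf(x_{t-\sigma})+b_t)$ together with Krasnoselskii's theorem and the backward recursion under $\tau>\sigma$. The only (immaterial) difference is your smallness threshold $(q^\star-1)M$ in place of the paper's $(1-\tfrac{1}{q^\star})M$; both make the self-mapping estimate close at exactly $M$.
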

\begin{proof}
The proof is similar to the proof of the Theorem \ref{l-infty} 
with operators
$$
(T_1x)_{n}=
\begin{cases}
0, &\text{for}\ 1\leq n<n_{0}
\\ 
-\tfrac{1}{q_{n+\tau}}x_{n+\tau}, &\text{for}\ n\geq n_{0}
\end{cases}
$$
$$
(T_2x)_{n}=
\begin{cases}
0, &\text{for}\ 1\leq n<n_{0}
\\ 
\tfrac{1}{q_{n+\tau}}\sum\limits_{s=n+\tau}^{\infty }\frac{1}{r_{s}}\sum\limits_{t=s}^{\infty }\left( a_{t}f(x_{t-\sigma})+b_t \right), &\text{for}\ n\geq n_{0},
\end{cases}
$$
where for any $M>0$ there exist $Q>0$ and $n_0>\beta:=\max\{\tau,\sigma\}$ such that
$$
\sum^{\infty}_{s=n_0}\left\vert\frac{1}{r_s}\right\vert\sum^{\infty}_{t=s}\left(|a_t|Q+|b_t|\right)<(1-\tfrac{1}{q^\star})M.
$$
\end{proof}

Now we use an approximation technique to get our main result of this section.
\begin{thm}\label{zb-1}
Assume that
\begin{itemize}
\item[$(H_0)$] $\tau,\sigma\in\N\cup\{0\}$, $\tau>\sigma$,
\item[$(H_{fb})$] $f$ is a locally Lipschitz function and $f$ is a bounded function with constant $P$;
\item[$(H_{sb})$] there exist $C\in(0,1)$ and increasing sequence $(w_k)_{k\in\N}\subset(0,1)$ with $\sum^\infty_{k=1}(1-w_k)<\infty$ such that
$$
\sum\limits_{s=k}^{\infty }\left\vert\frac{1}{r_{s}}\right\vert\sum\limits_{t=s}^{\infty }(\left\vert a_{t}\right\vert P+\left\vert b_{t}\right\vert )\in O\left((1-w_k)\left(Cw_k\right)^k\right);
$$ 


\item[$(H_{q=1})$] $q_n\in(0,1),\ n\in\N$, $\lim\limits_{n\to\infty}q_n=1$, $\inf\limits_{n\in\N}q_n>0$.
\end{itemize} 
Then, there exists a bounded full solution 
to equation \eqref{problem}.
\end{thm}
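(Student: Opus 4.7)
The plan is to overcome the failure of strict contractivity (caused by $\sup_n q_n = 1$) by approximation: for each $k$, solve an auxiliary equation with the contracted coefficient $w_k q_n$, and then extract a limit using Theorem \ref{Alaoglu}. Since $\sup_n |w_k q_n| \leq w_k < 1$ and $f$ is globally bounded by $P$ from $(H_{fb})$, the construction of Theorem \ref{l-infty} applies to the equation
$$
\Delta(r_n \Delta(x_n + w_k q_n x_{n-\tau})) = a_n f(x_{n-\sigma}) + b_n,
$$
with $Q = P$. I would take the starting index $n_0(k) := k$, so the admissibility condition becomes $\sum_{s=k}^{\infty}|r_s^{-1}|\sum_{t=s}^{\infty}(|a_t|P + |b_t|) < (1 - w_k) M$. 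Hypothesis $(H_{sb})$ asserts the left side is $O((1-w_k)(Cw_k)^k)$; since $(Cw_k)^k \to 0$, this inequality is satisfiable for all sufficiently large $k$ with one and the same constant $M$, so Krasnoselskii's theorem produces a solution $y^{(k)}$ to the auxiliary equation with $\|y^{(k)}\|_{\infty} \leq M$.

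To obtain a full solution, I would extend each $y^{(k)}$ backwards using
$$
y^{(k)}_{n-\tau} = \frac{1}{w_k q_n}\Bigl(-y^{(k)}_n + \sum_{s=n}^{\infty}\tfrac{1}{r_s}\sum_{t=s}^{\infty}\bigl(a_t f(y^{(k)}_{t-\sigma}) + b_t\bigr)\Bigr),
$$
which is legitimate because $(H_{q=1})$ ensures $\inf q_n > 0$ and hence $w_k q_n \neq 0$ for large $k$. The rapid decay $(Cw_k)^k$ from $(H_{sb})$ is needed to absorb the amplification factor $1/(w_k q_n) \geq 1$ at each backward step and keep the extensions uniformly bounded in $k$.

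With $(y^{(k)})$ uniformly bounded in $l^\infty \simeq (l^1)^\star$, Theorem \ref{Alaoglu} yields a subsequence $(y^{(k_j)})$ converging weakly-$\star$ to some $y \in l^\infty$; testing against the unit vectors of $l^1$ gives pointwise convergence $y^{(k_j)}_n \to y_n$ for each $n$. Finally, I would pass to the limit in the summed identity
$$
y^{(k_j)}_n + w_{k_j} q_n y^{(k_j)}_{n-\tau} = \sum_{s=n}^{\infty} \frac{1}{r_s} \sum_{t=s}^{\infty} \bigl(a_t f(y^{(k_j)}_{t-\sigma}) + b_t\bigr),
$$
using $w_{k_j} \to 1$, continuity of $f$, and dominated convergence (the dominating series being exactly the one in $(H_{sb})$). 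Applying $\Delta$ twice recovers \eqref{problem}, so the limit $y$ is the sought bounded full solution.

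The hard part will be the backward extension: each step multiplies by $1/(w_k q_n) \geq 1$, so a naive iteration amplifies exponentially in the number of steps. The particular form of the decay $(1-w_k)(Cw_k)^k$ in $(H_{sb})$ is tuned precisely to absorb this amplification and preserve a uniform bound across $k$; verifying this absorption rigorously is the place where the summability of $(1-w_k)$ and the choice of $C$ will have to be used carefully.
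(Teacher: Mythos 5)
Your overall strategy coincides with the paper's: solve the auxiliary equation with coefficient $w_kq_n$ via Theorem \ref{l-infty}, extend backwards, extract a weak$^\star$ (coordinatewise) limit by Theorem \ref{Alaoglu}, and pass to the limit in the summed identity. However, there is a genuine gap at exactly the point you defer as ``the hard part'', and the way you set up the forward step makes that part unworkable. You take a \emph{fixed} radius $M$ for all $k$, so Krasnoselskii only gives $|y^{(k)}_n|\le M$ for $n\ge k+\tau$. The backward recursion determines $y^{(k)}_{k+\tau-i}$ from $y^{(k)}_{k+\tau-i+1}$ (in the case $\tau=1$; similarly in general), multiplying at each step by $1/(w_kq_n)\ge 1/(Cw_k)>1$ after the normalization $\inf_n q_n>C$. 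After the roughly $k$ steps needed to reach the index $\tau$, the accumulated amplification is of order $(Cw_k)^{-k}\to\infty$, and starting from a $k$-independent bound $M$ you obtain $|y^{(k)}_\tau|\lesssim M(Cw_k)^{-k}$, which is \emph{not} uniformly bounded in $k$. The decay $(1-w_k)(Cw_k)^k$ in $(H_{sb})$ only controls the tail sums added at each backward step; it cannot retroactively shrink the initial datum $|y^{(k)}_{k+\tau}|\le M$ of the backward iteration.

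The missing idea, which is the crux of the paper's proof, is to exploit that $(H_{sb})$ lets you run Theorem \ref{l-infty} with the $k$-\emph{dependent} radius $M_k=D(Cw_k)^k$ (and starting index $n_0=k$): inequality \eqref{szereg-osz} gives \eqref{szereg-n0} with this $M_k$, so the forward solution already satisfies $|x^k_n|\le D(Cw_k)^k$ for $n\ge k+\tau$. Then each backward step's amplification by $(Cw_k)^{-1}$ is exactly cancelled by one power of $Cw_k$ in the bound, yielding $|x^k_{k-i}|\le D(Cw_k)^{k-i-1}\bigl(1+2(1-w_k)+\sum_{j=k-i+1}^{k-1}(1-w_j)\bigr)$ by induction, and hence a bound of the form $2D+D\sum_{i=1}^\infty(1-w_i)$ that is uniform in $k$; this is precisely where $\sum(1-w_k)<\infty$ enters. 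Without replacing your fixed $M$ by the shrinking $M_k$, the uniform boundedness of $(y^{(k)})$ in $l^\infty_\tau$ --- the hypothesis you need for Banach--Alaoglu --- fails, so the proof does not go through as proposed.
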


\begin{proof}
For any $k\in\N$, let us consider an auxiliary problem
\begin{equation}\label{problem-pom}
\Delta \left( r_{n} \Delta \left( x_{n}+w_kq_{n}x_{n-\tau}\right)\right)= a_{n}f(x_{n-\sigma})+b_n,
\end{equation}
where $(w_k)$ is the sequence satisfying $(H_{sb})$. It is obvious that
$$
\sup\{w_kq_{n}: n\in\N\}=w_k<1.
$$
Without loss of generality we can assume that  
$$
\inf\{q_{n}: n\in\N\}>C,
$$
where $C$ is the constant from assumption $(H_{sb})$. By $(H_{sb})$ there exist $k_0\in\N$, $D>0$ such that for any $k\ge k_0$ 
\begin{equation}\label{szereg-osz}
\sum\limits_{s=k}^{\infty }\left\vert\frac{1}{r_{s}}\right\vert\sum\limits_{t=s}^{\infty }\left\vert (a_{t}\right\vert P+\left\vert b_{t}\right\vert )\le D(1-w_k)\left(Cw_k\right)^k.
\end{equation}
From Theorem \ref{l-infty}, \eqref{problem-pom} possesses a bounded solution $x^k=(x^k_n)_{n\in\N_{n_k+\tau}}$ for some $n_k\in\N$. Moreover, by the proof of the Theorem \ref{l-infty} we see that \eqref{szereg-osz} implies \eqref{szereg-n0} with $M_k=D(Cw_k)^k$. From the Theorem \ref{l-infty} we get $x^k=(x^k_n)_{n\in\N_{_{n_k+\tau}}}\in l^\infty_{n_k+\tau}$ as the fixed point of the $T_1+T_2$ on $A_{n_k}$. 
By \eqref{szereg-osz} $n_k:=k$ for $k\ge k_0$. It means that for $k\ge k_0$, $x^k=(x^k_n)$ solve \eqref{problem-pom} for $n\ge k+\tau$ and  $|x_n^k|\le D(Cw_k)^k$ for $n\ge k+\tau$. We find previous $k$ terms of sequence $(x^k_n)_{n\ge \tau}$ by formula 
\begin{equation*}
x_{n-\tau}^k=\frac{1}{w_kq_{n}}\left( -x^k_{n}+\sum\limits_{s={n}}^{\infty} \frac{1}{r_{s}}\sum\limits_{t=s}^{\infty }\left(a_{t}f(x_{t-\sigma})+b_{t}\right)\right).
\end{equation*}
Putting $n:=n_0+2\tau-1=k+2\tau-1$ to above we get
\begin{align*}
&\abs{x^k_{n_0+\tau-1}}=\abs{x^k_{k+\tau-1}}\le \frac{1}{Cw_k}\left( D(Cw_k)^k+\sum\limits_{s={k+2\tau-1}}^{\infty} \frac{1}{|r_{s}|}\sum\limits_{t=s}^{\infty }(|a_{t}|P+|b_{t}|)\right)
\\
&\le \frac{1}{Cw_k}\left( D(Cw_k)^k+\sum\limits_{s={k}}^{\infty} \frac{1}{|r_{s}|}\sum\limits_{t=s}^{\infty }(|a_{t}|P+|b_{t}|)\right)\le D(1+(1-w_k))(Cw_k)^{k-1}.
\end{align*}
For $k\ge 2$
\begin{align*}
&\abs{x^k_{n_0+\tau-2}}=\abs{x^k_{k+\tau-2}}\le \frac{1}{Cw_k}\left( |x_{k+2\tau-2}|+\sum\limits_{s=k+2\tau-2}^{\infty} \frac{1}{|r_{s}|}\sum\limits_{t=s}^{\infty }(|a_{t}|P+|b_{t}|)\right)
\\
&\le\begin{cases}
\frac{1}{Cw_k}\left( D(1+(1-w_k))(Cw_k)^{k-1}+\sum\limits_{s=k}^{\infty} \frac{1}{|r_{s}|}\sum\limits_{t=s}^{\infty }(|a_{t}|P+|b_{t}|)\right) &\mbox{for} \ \tau=1
\\
\frac{1}{Cw_k}\left( D(Cw_k)^{k}+D(1-w_k)(Cw_k)^{k}\right) &\mbox{for} \ \tau\ge2
\end{cases}
\\
&\le\begin{cases}
D(1+2(1-w_k))(Cw_k)^{k-2} &\mbox{for} \ \tau=1
\\
D(1+(1-w_k))(Cw_k)^{k-1}&\mbox{for} \ \tau\ge2
\end{cases}
\end{align*}
We give the estimation of $|x^k_n|$ for the case $\tau=1$, $\sigma=0$. The other case are analogous and are left to the reader. Indeed,
\begin{align*}
&|x^k_{k-2}|\le(Cw_k)^{-1}\left(|x^k_{k-1}|+\sum\limits_{s={k-1}}^{\infty} \frac{1}{|r_{s}|}\sum\limits_{t=s}^{\infty }(|a_{t}|P+|b_{t}|)\right)
\\
&\le(Cw_k)^{k-3}\left(D+2D(1-w_k)\right)+D\tfrac{(Cw_{k-1})^{k-1}}{Cw_k}(1-w_{k-1})
\\
&\le(Cw_k)^{k-3}(D+2D(1-w_k))+D\tfrac{(Cw_k)^{k-1}}{Cw_k}(1-w_{k-1})
\\
&\le(Cw_k)^{k-3}(D+2D(1-w_k)+D(1-w_{k-1}))\le D+2D(1-w_k)+D(1-w_{k-1}).
\end{align*}
By induction for $i=3,\ldots,k-k_0+1$ 
\begin{align*}
&|x^k_{k-i}|\le D(Cw_k)^{k-i-1}\left(1+2(1-w_k)+\sum\limits_{j=k-i+1}^{k-1}(1-w_j)\right)
\\
&\le 2D+D\sum^{\infty}_{i=1}(1-w_{i}).
\end{align*}
Moreover,
$$
|x^k_{k_0-2}|\le\tfrac{1}{Cw_1}\left(2D+D\sum^{\infty}_{i=1}(1-w_{i})+\sum\limits_{s=k_0-1}^{\infty}\frac{1}{|r_{s}|}\sum\limits_{t=s}^{\infty }(|a_{t}|P+|b_{t}|)\right)
$$
and by induction
$$
|x^k_{1}|\le\tfrac{1}{Cw_1}\left(2D+D\sum^{\infty}_{i=1}(1-w_{i})+\sum\limits_{j=2}^{k_0-1}\sum\limits_{s=j}^{\infty}\frac{1}{|r_{s}|}\sum\limits_{t=s}^{\infty }(|a_{t}|P+|b_{t}|)\right).
$$
Hence, for $n\in\N_{\tau}$, $k\ge k_0$
$$
|x^k_n|\le\tfrac{1}{Cw_1}\left(2D+D\sum^{\infty}_{i=1}(1-w_{i})+\sum\limits_{j=2}^{k_0-1}\sum\limits_{s=j}^{\infty}\frac{1}{|r_{s}|}\sum\limits_{t=s}^{\infty }(|a_{t}|P+|b_{t}|)\right).
$$
It means that  the sequence $(x^k)_{k\ge k_0}$ is bounded in $l^\infty_{\tau}$. Since $(l^1_\tau)^{\star}=l^\infty_\tau$, then from the Banach-Alaoglu theorem we get that there exists $(x^{k_l})_{l\in\N}\subset(x^{k})_{k\in\N}$ which convergent on its coordinates. It means that there exists $\overline{x}=(\overline{x}_n)_{n\in\N_{\tau}}\in l^\infty_{\tau}$ such that
$$
\lim_{l\to\infty}x^{k_l}_n=\overline{x}_n, \ \textrm{for} \ n\in\N_{\tau}
$$
and
\begin{equation}\label{ogr-x}
|\overline{x}_n|\le\tfrac{1}{Cw_1}\left(2D+D\sum^{\infty}_{i=1}(1-w_{i})+\sum\limits_{j=1}^{k_0-1}\sum\limits_{s=j}^{\infty}\frac{1}{|r_{s}|}\sum\limits_{t=s}^{\infty }(|a_{t}|P+|b_{t}|)\right), 
\end{equation}
for $n\in\N_{\tau}$. To get our results we pass with $l\to\infty$ in 
\begin{equation}\label{apr}
x^{k_l}_n+w_{k_l}q_nx^{k_l}_{n-\tau}=\sum\limits_{s={n}}^{\infty} \frac{1}{r_{s}}\sum\limits_{t=s}^{\infty }\left(a_{t}f(x^{k_l}_{t-\sigma})+b_{t}\right), \ \mbox{for} \ n\geq \tau. 
\end{equation}
It is easy to see that for $n\geq\tau$ we have
$$
\lim_{l\to\infty}\left(x^{k_l}_n+w^{k_l}q_nx^{k_l}_{n-\tau}\right)=\overline{x}_{n}+q_n\overline{x}_{n-\tau}.
$$ 
From Lebesgue's dominated convergence theorem and the continuity of $f$ we get
\begin{align*}
&\lim_{l\to\infty}\left(\sum^\infty_{s=n}\tfrac{1}{r_s}\sum^\infty_{t=s}\left(a_tf(x^{k_l}_{t-\sigma})+b_t\right)\right)=\sum^\infty_{s=n}\tfrac{1}{r_s}\sum^\infty_{t=s}\left(a_t\left(\lim_{l\to\infty} f(x^{k_l}_{t-\sigma})\right)+b_t\right)=
\\
&\sum^\infty_{s=n}\tfrac{1}{r_s}\sum^\infty_{t=s}\left(a_t f(\overline{x}_{t-\sigma})+b_t\right).
\end{align*}
From \eqref{apr} we get that
$$
\overline{x}_n+q_n\overline{x}_{n-\tau}=\sum\limits_{s={n}}^{\infty} \frac{1}{r_{s}}\sum\limits_{t=s}^{\infty }\left(a_{t}f(\overline{x}_{t-\sigma})+b_{t}\right), 
$$
for $n\geq\tau$. Applying operator $\Delta$ to both sides of the above equation and multiplying by $r_n$ and applying operator $\Delta$ second time we get
$$
\Delta\left(r_n\Delta(\overline{x}_n+q_n\overline{x}_{n-\tau})\right)=a_nf(\overline{x}_{n-\sigma})+b_n, \ \textrm{for} \ n\geq\tau.
$$
From \eqref{ogr-x} we get that $(\overline{x}_n)_{n\geq\tau}$ is the bounded full solution to \eqref{problem}.
\end{proof}

Now, we present an example of equation which can be considered by our method.

\begin{exm}\label{1}
The following problem 
\begin{equation}\label{prob}  
\Delta\left( \left( -1\right) ^{n}\Delta \left( x_{n}+\left(1-\tfrac{1}{2^n}\right)x_{n-3}\right)\right)=\tfrac{3}{4}\tfrac{1}{2^{n}}(\sin x_{n-1})^6,\ n\ge 3
\end{equation}
with $\tau=3$, $\sigma=1$, $r_{n}=(-1)^{n}$, $q_{n}=1-\tfrac{1}{2^n}$, $a_{n}=\frac{3}{4}\tfrac{1}{2^{n}}$, $b_n=0$, $n\ge 1$ and an bounded $f(x)=(\sin x)^6$ fulfil assumptions of the Theorem \ref{zb-1}. Indeed, because $f$ is a locally Lipschitz function as $f\in C^1(\R)$, we have to check only $(H_{sb})$. For $C=9/10$ and $w_k=1-(5/8)^{k}$, $k\ge 1$ and $P=1$ we get for there exists $k_0$ such that for any $k\ge k_0$
$$
3\left(\tfrac{8}{9}\right)^k<\left(1-\left(\tfrac{5}{8}\right)^k\right)^k.
$$
So for any $k\ge k_0$
$$
\sum^\infty_{s=k}\sum^\infty_{t=s}|a_t|=\sum^\infty_{s=k}\sum^\infty_{t=s}\tfrac{3}{4}\tfrac{1}{2^{s}}=3\tfrac{1}{2^{k}}<
\left(\tfrac{9}{10}\right)^k\left(1-\left(\tfrac{5}{8}\right)^k\right)^k\left(\tfrac{5}{8}\right)^k.
$$
  It is easy to see that $x_{n}=\left( -1\right) ^{n}$ is the bounded solution to \eqref{prob}.
\end{exm}
Using the same technique we get the following result.
\begin{thm}
Assume that
\begin{itemize}
\item[$(H_0)$] $\tau,\sigma\in\N\cup\{0\}$, $\tau>\sigma$,
\item[$(H_{fb})$] $f$ is a locally Lipschitz function and $f$ is a bounded function with the constant $P$;
\item[$(H_{sb})$] there exists decreasing sequence $(w_k)_{k\in\N}\subset(1,\infty)$ with $\sum_{k=1}^{\infty}(w_k-1)<\infty$ such that
$$
\sum\limits_{s=k}^{\infty }\left\vert\frac{1}{r_{s}}\right\vert\sum\limits_{t=s}^{\infty }\left\vert (a_{t}\right\vert P+\left\vert b_{t}\right\vert )\in O\left(\left(w_k-1\right)w_k^{-k}\right);
$$ 
\item[$(H^1_{q=1})$] $q_n>1,\ n\in\N$, $\lim\limits_{n\to\infty}q_n=1$.
\end{itemize}
Then, there exists a bounded full solution to \eqref{problem}.
\end{thm}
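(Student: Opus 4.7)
The plan is to mirror the proof of Theorem \ref{zb-1}, swapping the roles of the two underlying existence results so as to accommodate $q_n>1$ instead of $q_n<1$. For each $k\in\N$ I would consider the auxiliary equation
\[
\Delta\bigl(r_n\Delta(x_n+w_k q_n x_{n-\tau})\bigr)=a_n f(x_{n-\sigma})+b_n.
\]
Since $w_k>1$ and $q_n>1$, one has $\inf_n w_k q_n\ge w_k>1$, so the preceding theorem (the one with hypothesis $\inf q_n>1$, using the operators $(T_1 x)_n=-x_{n+\tau}/q_{n+\tau}$) applies and furnishes a bounded full solution $x^k=(x_n^k)$ of the auxiliary problem.

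The next step is to track the bound on $x^k$ quantitatively. In the proof of the preceding theorem, the admissible bound $M_k$ is constrained by $(1-1/q^\star_k)M_k$ exceeding the relevant tail, where $q^\star_k=\inf(w_k q_n)\ge w_k$. Since $1-1/q^\star_k\ge (w_k-1)/w_k$ and assumption $(H_{sb})$ gives the tail as $O\bigl((w_k-1)w_k^{-k}\bigr)$, one may take $M_k=O\bigl(w_k^{-(k-1)}\bigr)$, so that $|x_n^k|\le M_k$ for $n\ge k+\tau$. To estimate $x_n^k$ at smaller indices I would iterate
\[
x^k_{n-\tau}=\frac{1}{w_k q_n}\Bigl(-x^k_n+\sum_{s=n}^\infty \frac{1}{r_s}\sum_{t=s}^\infty\bigl(a_t f(x^k_{t-\sigma})+b_t\bigr)\Bigr)
\]
backwards from $n=k+2\tau-1$ down to $n=\tau$, exactly as in Theorem \ref{zb-1}. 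The decisive feature is that the prefactor $1/(w_k q_n)\le 1/w_k<1$ is contractive (in contrast to the expansion by $1/(w_k q_n)>1$ encountered in Theorem \ref{zb-1}); combining this with the tail bound $O((w_k-1)w_k^{-k})$ and the summability $\sum(w_k-1)<\infty$ then yields an estimate of $(x_n^k)_{n\in\N_\tau}$ uniform in $k$.

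The remainder of the argument follows the scheme of Theorem \ref{zb-1}: by Theorem \ref{Alaoglu} applied via $(l^1_\tau)^\star=l^\infty_\tau$, extract a subsequence $(x^{k_l})$ converging coordinate-wise to some $\overline x\in l^\infty_\tau$; then pass to the limit in
\[
x^{k_l}_n+w_{k_l} q_n x^{k_l}_{n-\tau}=\sum_{s=n}^\infty\frac{1}{r_s}\sum_{t=s}^\infty\bigl(a_t f(x^{k_l}_{t-\sigma})+b_t\bigr),
\]
using continuity of $f$, its boundedness by $P$, and Lebesgue's dominated convergence theorem (with summable dominant $|a_t|P+|b_t|$) for the double tail sum. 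Applying $\Delta(r_n\Delta\,\cdot)$ to the resulting limit identity recovers \eqref{problem} for $n\ge\tau$, producing the desired bounded full solution $\overline x$.

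The principal technical difficulty is the backward-propagation step and the associated bookkeeping, which must balance the geometric decay $M_k=O(w_k^{-(k-1)})$ of the forward bound against the contraction factor $1/(w_k q_n)$ and the cumulative tail contributions added at each backward step. The situation is actually more favourable than in Theorem \ref{zb-1} because the backward iteration contracts rather than expands, so the telescoping is cleaner; the hypothesis $\sum(w_k-1)<\infty$ is precisely what is needed to assemble the accumulated tail contributions into a $k$-uniform bound and so close the estimate.
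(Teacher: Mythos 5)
Your proposal is correct and is essentially the paper's intended argument: the paper offers no separate proof for this theorem, stating only that it follows ``using the same technique'' as Theorem \ref{zb-1}, and your adaptation --- auxiliary coefficient $w_kq_n$ with $w_k\downarrow 1$ so that the $\inf q_n>1$ existence theorem applies, the quantitative bound $M_k=O(w_k^{-(k-1)})$, the now-contractive backward propagation controlled by $\sum_k(w_k-1)<\infty$, and the Banach--Alaoglu extraction followed by passage to the limit in the summed equation --- is exactly that technique correctly transposed to the case $q_n>1$.
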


\begin{rem}
Analogous theorems we get if we change the assumption $(H_{q=1})$ or $(H^1_{q=1})$ to one of assumptions
\begin{itemize}
\item[$(H_{q=-1})$] $q_n\in(-1,0),\ n\in\N,\quad  \lim\limits_{n\to\infty}q_n=-1$, $\sup\limits_{n\in\N}q_n>0$.
\item[$(H^1_{q=-1})$] $q_n<-1,\ n\in\N,\quad  \lim\limits_{n\to\infty}q_n=-1.$
\end{itemize} 
\end{rem}

\section{The existence of $l^p$-solution}
In this section, we also assume $\tau\in \N\cup\{0\}$, $\sigma\in\Z$, $a, b, q:{\mathbb{N}}\rightarrow {\mathbb{R}}$, $r:{\mathbb{N}}\rightarrow {\mathbb{R}}\setminus \{0\}$ and $f:{\mathbb{R}}\rightarrow \mathbb{R}$. 
\begin{thm}\label{lp}
Assume that
\begin{itemize}
\item[$(H_p)$] $p\ge 1$;
\item[$(H_{fl})$] $f$ is a locally Lipschitz function; 
\item[$(H_{sp})$] $\sum\limits_{n=1}^{\infty }\left(\sum\limits_{s=n}^{\infty } \left\vert\frac{1}{r_{s}}\right\vert\sum\limits_{t=s}^{\infty }\left\vert a_{t}\right\vert\right)^p <+\infty,$\quad $\sum\limits_{n=1}^{\infty }\left(\sum\limits_{s=n}^{\infty } \left\vert\frac{1}{r_{s}}\right\vert\sum\limits_{t=s}^{\infty }\left\vert b_{t}\right\vert\right)^p <+\infty,$
\item[$(H_{qp})$] $\sup\limits_{n\in\N}|q_n|=q^\star\in(0,2^{1-p})$.
\end{itemize} 
Then, equation \eqref{problem} 
possesses a $l^p$-solution.
\end{thm}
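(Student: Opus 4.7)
The plan is to mirror the Krasnoselskii blueprint of Theorem \ref{l-infty}, but carried out in $l^p$ rather than $l^\infty$, with Theorem \ref{costara} replacing Theorem \ref{uni} as the compactness criterion. First I fix $M>0$, use $(H_{fl})$ to obtain a bound $Q>0$ and a Lipschitz constant $L$ for $f$ on $[-M,M]$, and invoke $(H_{sp})$ to select $n_0>\beta:=\max\{\tau,\sigma\}$ so that
$$
\sum_{n=n_0}^{\infty}\left(\sum_{s=n}^{\infty}\left|\tfrac{1}{r_s}\right|\sum_{t=s}^{\infty}(|a_t|Q+|b_t|)\right)^p
$$
is smaller than a suitable multiple of $(1-2^{p-1}q^\star)M^p$. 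I then work in
$$
A_{n_0}=\{x\in l^p:x_1=\dots=x_{n_0+\beta-1}=0,\ \|x\|_{l^p}\le M\},
$$
which is nonempty, bounded, closed and convex, and define $T_1,T_2$ on $l^p$ by the same formulas as in the proof of Theorem \ref{l-infty}. Note that $x\in A_{n_0}$ implies $|x_n|\le\|x\|_{l^p}\le M$, so $|f(x_{n-\sigma})|\le Q$ throughout.

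Next I verify the hypotheses of Krasnoselskii's theorem. The contraction estimate is direct:
$$
\|T_1x-T_1y\|_{l^p}^p=\sum_n|q_n|^p|x_{n-\tau}-y_{n-\tau}|^p\le(q^\star)^p\|x-y\|_{l^p}^p,
$$
so $T_1$ is a $q^\star$-contraction (note $q^\star<2^{1-p}\le 1$). For invariance $T_1x+T_2y\in A_{n_0}$ I use the convexity inequality $(a+b)^p\le 2^{p-1}(a^p+b^p)$ combined with $(q^\star)^p\le q^\star$ to bound
$$
\|T_1x+T_2y\|_{l^p}^p\le 2^{p-1}q^\star M^p+2^{p-1}\|T_2y\|_{l^p}^p,
$$
which is $\le M^p$ precisely once the tail of $\|T_2y\|_{l^p}^p$ is controlled by the cutoff above; this is the step in which the quantitative threshold $q^\star<2^{1-p}$ becomes decisive, since we need $1-2^{p-1}q^\star>0$.

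Continuity of $T_2$ follows exactly as in Theorem \ref{l-infty}, with the local Lipschitz constant $L$ and the finiteness of the double series in $(H_{sp})$ yielding a Lipschitz-type estimate for $T_2$ in the $l^p$-norm. For compactness I invoke Theorem \ref{costara}: $T_2(A_{n_0})$ is bounded, and
$$
\sup_{y\in A_{n_0}}\sum_{n=l}^{\infty}|(T_2y)_n|^p\le\sum_{n=l}^{\infty}\left(\sum_{s=n}^{\infty}\left|\tfrac{1}{r_s}\right|\sum_{t=s}^{\infty}(|a_t|Q+|b_t|)\right)^p\xrightarrow{l\to\infty}0
$$
by $(H_{sp})$, so $T_2(A_{n_0})$ is relatively compact in $l^p$. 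Krasnoselskii's theorem then provides a fixed point $x\in A_{n_0}$ of $T_1+T_2$, and the familiar double-$\Delta$ argument at the end of the proof of Theorem \ref{l-infty} shows that $x$ solves \eqref{problem}; since $x\in l^p$ by construction, it is the desired $l^p$-solution.

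The main obstacle I expect is the invariance step: the interplay between the $p$-th power of the triangle inequality, the crude bound $(q^\star)^p\le q^\star$ (valid because $q^\star<1$), and the tail choice from $(H_{sp})$ is exactly what forces the particular constant $q^\star<2^{1-p}$ in $(H_{qp})$. Everything else (contraction, continuity, compactness via Theorem \ref{costara}, and passage from the fixed point to the equation) is a routine transcription of the $l^\infty$ argument into $l^p$.
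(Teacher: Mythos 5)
Your proposal is correct and follows essentially the same route as the paper: Krasnoselskii's theorem in $l^p$ with the same operators $T_1,T_2$, invariance obtained by applying $(a+b)^p\le 2^{p-1}(a^p+b^p)$ twice together with $(q^\star)^p\le q^\star$ (which is exactly where $q^\star<2^{1-p}$ enters), and compactness of $T_2(A_{n_0})$ via Theorem \ref{costara}. The only cosmetic difference is that the paper normalizes the radius of $A_{n_0}$ to $M=1$ and names the bound $W$ instead of $Q$.
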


\begin{proof}
From the continuity of $f$ on $[-1,1]$ we get the existence of $W>0$ such that
$$
|f(x)|\leq W, \ \textrm{for} \ x\in[-1, 1].
$$
The assumption $(H_{sp})$ implies there exists $n_0>\beta:=\max\{\tau,\sigma\}$ such that
\begin{equation}\label{szereg-p}
4^{p-1}\left[W^p\sum^{\infty}_{n=n_0}\left(\sum^{\infty}_{s=n}\left\vert\frac{1}{r_s}\right\vert\sum^{\infty}_{t=s}|a_t|\right)^p+\sum^{\infty}_{n=n_0}\left(\sum^{\infty}_{s=n}\left\vert\frac{1}{r_s}\right\vert\sum^{\infty}_{t=s}|b_t|\right)^p\right]<1-2^{p-1}q^\star.
\end{equation}
We consider Banach space $l^p$ and its subset
$$
A_{n_0}=\left\{x=(x_n)_{n\in\N}\in l^p: x_1=\ldots=x_{n_0+\beta-1}=0, \ \norm{x}_{l^p}\le 1\right\}.
$$ 
Observe that $A_{n_0}$ is a nonempty, bounded, convex and closed subset of $l^p$.
\\Define two mappings $T_1,T_2\colon l^p\rightarrow l^p$ as follows 
$$
(T_1x)_{n}=
\begin{cases}
0, &\text{for}\ 1\leq n<n_{0}+\beta
\\ 
-q_{n}x_{n-\tau}, &\text{for}\ n\geq n_{0}+\beta
\end{cases}
$$
$$
(T_2x)_{n}=
\begin{cases}
0, &\text{for}\ 1\leq n<n_{0}+\beta
\\ 
\sum\limits_{s=n}^{\infty }\frac{1}{r_{s}}\sum\limits_{t=s}^{\infty }\left( a_{t}f(x_{t-\sigma})+b_t \right), &\text{for}\ n\geq n_{0}+\beta.
\end{cases}
$$
Now, we prove that assumptions of Theorem \ref{Kras} - Krasnoselskii's fixed point. 
\\Firstly, we show that $T_1x+T_2y\in A_{n_0}$ for $x,y\in A_{n_0}$. Let $x,y\in A_{n_0}$. For $n<n_{0}+\beta$ $(T_1x+T_2y)_n=0$. Using twice the classical inequality
$$
(x+y)^p\le 2^{p-1}(x^p+y^p) \ \mbox{for} \ x,y\ge 0, \ p\ge1
$$
for $n\geq n_{0}+\beta$ we get
\begin{align*}
&\left\vert(T_1x+T_2y)_n\right\vert^p\leq 2^{p-1}\left[(q^\star)^p\left\vert x_{n-\tau}\right\vert^p +\left(\sum^{\infty}_{s=n}\left\vert\tfrac{1}{r_s}\right\vert\sum^\infty_{t=s}\left(|a_t|W+|b_t|\right)\right)^p\right], 
\\
&\le2^{p-1}\Bigg[q^\star\left\vert x_{n-\tau}\right\vert^p + 2^{p-1}\Bigg(W^p\left(\sum^{\infty}_{s=n}\left\vert\frac{1}{r_s}\right\vert\sum^{\infty}_{t=s}|a_t|\right)^p+\left(\sum^{\infty}_{s=n}\left\vert\frac{1}{r_s}\right\vert\sum^{\infty}_{t=s}|b_t|\right)^p\Bigg)\Bigg]
\\
&\le 2^{p-1}q^\star\left\vert x_{n-\tau}\right\vert^p + 4^{p-1}\Bigg[W^p\left(\sum^{\infty}_{s=n}\left\vert\frac{1}{r_s}\right\vert\sum^{\infty}_{t=s}|a_t|\right)^p+\left(\sum^{\infty}_{s=n}\left\vert\frac{1}{r_s}\right\vert\sum^{\infty}_{t=s}|b_t|\right)^p\Bigg].
\end{align*}
By \eqref{szereg-p} we obtain that
\begin{align*}
&\norm{T_1x+T_2y}^p_{l^p}\le 2^{p-1}q^\star\norm{x}^p_{l^p}+4^{p-1}\Bigg[W^p\sum^\infty_{n=n_0+\beta}\left(\sum^{\infty}_{s=n}\left\vert\frac{1}{r_s}\right\vert\sum^{\infty}_{t=s}|a_t|\right)^p
\\
&+\sum^\infty_{n=n_0+\beta}\left(\sum^{\infty}_{s=n}\left\vert\frac{1}{r_s}\right\vert\sum^{\infty}_{t=s}|b_t|\right)^p\Bigg]\le 1.
\end{align*}
It is easy to see that
$$
\norm{T_1x-T_1y}_{l^p}\leq q^\star \norm{x-y}_{l^p},\ \textrm{for} \ x,y\in A_{n_0},
$$
so that $T_1$ is a contraction. 
\\To prove the continuity of $T_2$, we note assumption $(H_{fl})$ implies that $f$ is a Lipschitz function on $[-1,1]$ which means that exists $L>0$ such that
$$
\abs{f(u)-f(v)}\leq L\abs{u-v},\ \textrm{for} \ u,v\in [-1,1].
$$
Hence for $x, y\in A_{n_0}$ and $n\ge n_0+\beta$
\begin{align*}
&\abs{(T_2x-T_2y)_n}^p\leq \left( \sum\limits_{s=n}^{\infty} \left\vert\frac{1}{r_{s}}\right\vert\sum\limits_{t=s}^{\infty }|a_{t}|L\abs{x_{t-\sigma}-y_{t-\sigma}} \right)^p 
\\
&\le L^p\left(\sum^\infty_{s=n}\left\vert\frac{1}{r_{s}}\right\vert\sum\limits_{t=s}^{\infty }|a_{t}|\right)^p\norm{x-y}^p_{l_p}.
\end{align*}
Combine above with $(H_{sp})$ we get that
$$
\norm{T_2x-T_2y}_{l_p}\leq L\left[\sum\limits_{n=0}^{\infty}\left(\sum\limits_{s=n}^{\infty} \left\vert\frac{1}{r_{s}}\right\vert\sum\limits_{t=s}^{\infty }\left\vert a_{t}\right\vert \right)^p\, \right]^{1/p}\norm{x-y}_{l_p}, \ \textrm{for} \ x,y\in A_{n_0}.
$$
Actually, we prove that $T_2$ is a Lipschitz operator.
\\In an analogous way we get for $x\in A_{n_0}$ and $n\ge n_0+\beta$ 
$$
\abs{(T_2x)_n}^p\le 2^{p-1}\left[W^p\left(\sum^\infty_{s=n}\left\vert\tfrac{1}{r_s}\right\vert\sum^\infty_{t=s}|a_t|\right)^p+\left(\sum^\infty_{s=n}\left\vert\tfrac{1}{r_s}\right\vert\sum^\infty_{t=s}|b_t|\right)^p\right]
$$
and hence by $(H_{sp})$
\begin{align*}
&\lim_{l\to\infty}\sup_{x\in A_{n_0}}\sum^\infty_{n=l}|(T_2x)_n|^p \le  
\\
&\lim_{l\to\infty}\sum^\infty_{n=l}2^{p-1}\left[W^p\left(\sum^\infty_{s=n}\left\vert\tfrac{1}{r_s}\right\vert\sum^\infty_{t=s}|a_t|\right)^p+\left(\sum^\infty_{s=n}\left\vert\tfrac{1}{r_s}\right\vert\sum^\infty_{t=s}|b_t|\right)^p\right]=0,
\end{align*}
which means that $T_2(A_{n_0})$ is relatively compact subset of $l^p$.
\\From the Theorem \ref{Kras} we get that there exists $x=(x_n)_{n\in\N_1}$ the fixed point of $T_1+T_2$ on $A_{n_0}$. Applying operator $\Delta$ to both sides of the above equation and multiplying by $r_n$ and applying operator $\Delta$ second time for $n\ge n_0+\beta$ we get $x=(x_n)_{n\in\N_{n_0+\beta}}$ is the $l^p$-solution to \eqref{problem}.
\end{proof}

\begin{rem}
It is worth mention that for $p=1$ assumption $(H_{sb})$ implies assumption $(H_{sp})$.
\end{rem}

Now, we present an example of equation  for which our method can be applied.

\begin{exm}
Let us consider the following problem 
\begin{equation}\label{prob-1}  
\Delta\left( \left( -1\right) ^{n}\Delta \left( x_{n}+q_nx_{n-3}\right)\right)=2^{-n}f(x_{n-\sigma})+\tfrac{1}{n(n+1)(n+2)(n+3)},\ n\ge 3
\end{equation}
with $\tau=3$, $\sigma=1$, $r_{n}=(-1)^{n}$, $a_{n}=2^{-n}$, $b_n=\tfrac{1}{n(n+1)(n+2)(n+3)}$, $n\ge 1$ and any $(q_n)$,  $\sup|q_{n}|<1$, $f\in C^1(\R)$. Note
\begin{align*}
&\sum^{\infty}_{n=1}\sum^{\infty}_{s=n}\left|\tfrac{1}{r_s}\right|\sum^{\infty}_{t=s}|a_t|=4,
\\
&\sum^{\infty}_{n=1}\sum^{\infty}_{s=n}\left|\tfrac{1}{r_s}\right|\sum^{\infty}_{t=s}|b_t|=\sum^{\infty}_{n=1}\sum^{\infty}_{s=n}\sum^{\infty}_{t=s}\tfrac{1}{t(t+1)(t+2)(t+3)}
\\
&=\sum^{\infty}_{n=1}\sum^{\infty}_{s=n}\tfrac{1}{4s(s+1)(s+2)}=\sum^{\infty}_{n=1}\tfrac{1}{12n(n+1)}<\infty
\end{align*}
which means that assumptions of the Theorem \ref{zb-1} are fulfilling with $p=1$. Hence \eqref{prob-1} has a $l^1$-solution. It is obvious that this $l^1$-solution is a $l^p$-solution for any $p>1$.
\end{exm}

\begin{cor}\label{lp-drugie}
Assume that
\begin{itemize}
\item[$(H_p)$] $p\ge 1$;
\item[$(H_{fl})$] $f$ is a locally Lipschitz function; 
\item[$(H'_{sp})$] $\sum\limits_{n=1}^{\infty }\left(\sum\limits_{s=n}^{\infty } \left\vert\frac{1}{r_{s}}\right\vert\sum\limits_{t=\sigma}^{s-1}\left\vert a_{t}\right\vert\right)^p <+\infty,$\quad $\sum\limits_{n=1}^{\infty }\left(\sum\limits_{s=n}^{\infty } \left\vert\frac{1}{r_{s}}\right\vert\sum\limits_{t=\sigma}^{s-1}\left\vert b_{t}\right\vert\right)^p <+\infty,$
\item[$(H_{qp})$] $\sup\limits_{n\in\N}|q_n|=q^\star\in(0,2^{1-p})$.
\end{itemize} 
Then, equation \eqref{problem} possesses a bounded solution.
\end{cor}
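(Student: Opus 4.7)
The plan is to run the Krasnoselskii argument from the proof of Theorem \ref{lp} verbatim, replacing only the operator $T_2$ by the variant with the inner sum going from $\sigma$ to $s-1$, exactly as Corollary \ref{l-infty-drugie} modified Theorem \ref{l-infty}. Concretely, on the same set
$$
A_{n_0}=\left\{x=(x_n)_{n\in\N}\in l^p: x_1=\ldots=x_{n_0+\beta-1}=0,\ \norm{x}_{l^p}\le 1\right\},
$$
I keep $(T_1x)_n=-q_n x_{n-\tau}$ for $n\ge n_0+\beta$ and take
$$
(T_2x)_n=-\sum_{s=n}^{\infty}\frac{1}{r_s}\sum_{t=\sigma}^{s-1}\bigl(a_tf(x_{t-\sigma})+b_t\bigr)\qquad(n\ge n_0+\beta).
$$
Using the continuity of $f$ on $[-1,1]$, pick $W>0$ with $|f(x)|\le W$ on $[-1,1]$, and then use $(H'_{sp})$ to choose $n_0>\beta$ large enough that
$$
4^{p-1}\Bigl[W^p\sum_{n=n_0}^{\infty}\Bigl(\sum_{s=n}^{\infty}\bigl|\tfrac{1}{r_s}\bigr|\sum_{t=\sigma}^{s-1}|a_t|\Bigr)^p+\sum_{n=n_0}^{\infty}\Bigl(\sum_{s=n}^{\infty}\bigl|\tfrac{1}{r_s}\bigr|\sum_{t=\sigma}^{s-1}|b_t|\Bigr)^p\Bigr]<1-2^{p-1}q^\star.
$$

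Next I would verify the hypotheses of Theorem \ref{Kras} using only minor adaptations of the computations in the proof of Theorem \ref{lp}: applying the inequality $(u+v)^p\le 2^{p-1}(u^p+v^p)$ twice gives $T_1x+T_2y\in A_{n_0}$ whenever $x,y\in A_{n_0}$; the bound $\norm{T_1x-T_1y}_{l^p}\le q^\star\norm{x-y}_{l^p}$ (with $q^\star<2^{1-p}\le 1$) makes $T_1$ a contraction; the local Lipschitz constant $L$ of $f$ on $[-1,1]$ together with $(H'_{sp})$ shows $T_2$ is Lipschitz, hence continuous; and relative compactness of $T_2(A_{n_0})$ in $l^p$ follows from Theorem \ref{costara} since the pointwise bound
$$
|(T_2x)_n|^p\le 2^{p-1}\Bigl[W^p\Bigl(\sum_{s=n}^{\infty}\bigl|\tfrac{1}{r_s}\bigr|\sum_{t=\sigma}^{s-1}|a_t|\Bigr)^p+\Bigl(\sum_{s=n}^{\infty}\bigl|\tfrac{1}{r_s}\bigr|\sum_{t=\sigma}^{s-1}|b_t|\Bigr)^p\Bigr]
$$
is independent of $x\in A_{n_0}$ and summable by $(H'_{sp})$, so the tail $\sum_{n\ge l}|(T_2x)_n|^p$ goes to zero uniformly in $x$.

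Krasnoselskii's theorem then produces a fixed point $x=(x_n)\in A_{n_0}$ of $T_1+T_2$. The only step that requires real verification, as opposed to a mechanical rewriting of Theorem \ref{lp}, is to check that this fixed point actually satisfies \eqref{problem}: writing $y_n:=x_n+q_nx_{n-\tau}$, the fixed point equation reads $y_n=-\sum_{s=n}^{\infty}\tfrac{1}{r_s}\sum_{t=\sigma}^{s-1}(a_tf(x_{t-\sigma})+b_t)$, whence $\Delta y_n=\tfrac{1}{r_n}\sum_{t=\sigma}^{n-1}(a_tf(x_{t-\sigma})+b_t)$, so $r_n\Delta y_n=\sum_{t=\sigma}^{n-1}(a_tf(x_{t-\sigma})+b_t)$, and a second application of $\Delta$ collapses the telescoping sum to $a_nf(x_{n-\sigma})+b_n$, i.e.\ \eqref{problem}. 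Since $x\in A_{n_0}\subset l^p$, this $x$ is an $l^p$-solution, which is in particular bounded.

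The main obstacle, and the only substantive departure from the proof of Theorem \ref{lp}, is confirming this telescoping identity with the truncated inner sum $\sum_{t=\sigma}^{s-1}$: without the indefinite upper limit one must check that applying $\Delta$ twice genuinely reproduces \eqref{problem} without boundary artifacts, which the calculation above does. All the Banach-space estimates needed for Krasnoselskii are then quantitatively identical to those in the proof of Theorem \ref{lp}, with $\sum_{t=s}^{\infty}$ replaced everywhere by $\sum_{t=\sigma}^{s-1}$, so no new analytic difficulty arises.
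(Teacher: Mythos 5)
Your proposal is correct and is exactly the adaptation the paper intends: the paper states this corollary without proof, and the expected argument is Theorem \ref{lp}'s Krasnoselskii scheme with $T_2$ replaced by the truncated-sum variant of Corollary \ref{l-infty-drugie}, which is precisely what you carry out (including the sign and the telescoping check that $\Delta(r_n\Delta y_n)$ recovers \eqref{problem}). No new content beyond the paper's route.
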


\begin{cor}
If in Theorem \ref{lp} or Corollary \ref{lp-drugie} we additionally assume
\begin{itemize}
\item[$(H'_{0})$] $\tau,\sigma\in\N\cup\{0\}$, $\tau>\sigma$ and $q_n\neq0$, \ $n\in\N$.
\end{itemize} 
Then, equation \eqref{problem} possesses full $l^p$-solution.
\end{cor}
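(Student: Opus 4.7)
Plan. This corollary adapts the backward-extension technique already used after Corollary~\ref{l-infty-drugie} for the bounded full solution to the $l^p$ setting established in Theorem~\ref{lp} / Corollary~\ref{lp-drugie}. My first step is to invoke Theorem~\ref{lp} (respectively Corollary~\ref{lp-drugie}) to obtain an $l^p$-solution $x=(x_n)_{n\in\N_1}$ realized as the Krasnoselskii fixed point of $T_1+T_2$ on $A_{n_0}$. By construction $x_1=\cdots=x_{n_0+\beta-1}=0$ and, for every $n\geq n_0+\beta$,
\[
x_n+q_n x_{n-\tau}=\sum_{s=n}^\infty\frac{1}{r_s}\sum_{t=s}^\infty\bigl(a_tf(x_{t-\sigma})+b_t\bigr)
\]
(or, in the Corollary~\ref{lp-drugie} case, with $-\sum_{t=\sigma}^{s-1}$ replacing the tail sum on~$t$). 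The hypothesis $(H'_0)$ gives $\beta=\tau$, $\tau>\sigma$, and $q_n\neq 0$.

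Next, I solve the fixed-point identity for $x_{n-\tau}$,
\[
x_{n-\tau}=\frac{1}{q_n}\left(-x_n+\sum_{s=n}^\infty\frac{1}{r_s}\sum_{t=s}^\infty\bigl(a_tf(x_{t-\sigma})+b_t\bigr)\right),
\]
and apply this formula successively with $n=n_0+2\tau-1,\,n_0+2\tau-2,\ldots,2\tau$, each time overwriting the previously declared zero value at the index $n-\tau$. Because $\tau>\sigma$, the right-hand side at each step depends only on coordinates $x_j$ with $j\geq n-\sigma>n-\tau$, all of which have already been assigned at earlier steps or inherited from the original fixed point; in particular the later substitutions never touch indices appearing on the right-hand side at larger~$n$, so no previously established identity is retroactively broken. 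After all $n_0$ substitutions, the extended sequence $\tilde x$ is defined on $\N_{\max\{\tau,\sigma\}}=\N_\tau$ and satisfies the fixed-point identity for every $n\geq 2\tau$.

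Finally, applying $\Delta$, multiplying by $r_n$, and applying $\Delta$ once more to this identity recovers \eqref{problem} on $\N_{\max\{\tau,\sigma\}}$, producing the required full $l^p$-solution. Since only finitely many entries of $x$ have been altered from $0$ to finite values, $\tilde x\in l^p$ is automatic. The only genuinely delicate point is the book-keeping needed to check that each backwards substitution neither queries an undefined coordinate nor breaks an identity already arranged for larger~$n$; both of these reduce to the order in which the recursion is carried out, together with the strict inequality $\tau>\sigma$ supplied by $(H'_0)$.
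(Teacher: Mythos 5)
Your proposal is correct and follows essentially the same route the paper intends: the paper states this corollary without a written proof, but its earlier analogue (the full bounded solution after Corollary \ref{l-infty-drugie}) is proved by exactly this backward recursion $x_{n-\tau}=\frac{1}{q_n}\bigl(-x_n+\sum_{s=n}^{\infty}\frac{1}{r_s}\sum_{t=s}^{\infty}(a_tf(x_{t-\sigma})+b_t)\bigr)$ starting from $n=n_0+2\tau-1$. Your added observations --- that $\tau>\sigma$ makes the recursion well ordered and that altering finitely many coordinates preserves membership in $l^p$ --- are the only points specific to the $l^p$ setting, and you handle both correctly.
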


\end{document}